\date{}
\newlength{\defbaselineskip}
\newcommand{\setlinespacing}[1]%
           {\setlength{\baselineskip}{#1 \defbaselineskip}}
\newcommand{\N}{{\mathbb{N}}}
\newcommand{\actaqed}{\hfill $\actabox$}
{\medskip\noindent \textit{Proof of #1. }}%
{\actaqed \medskip}
\def\Di{{\mathcal D}}
\def\cB{\mathcal B}
\def\R{{\mathbb R}}
\def\Z{\mathbb Z}
\def \<{\langle}
\def\>{\rangle}
\def \La{\Lambda}
\def \ep{\epsilon}
\def \ff{\varphi}
\def\bt{\beta}
\def\la{\lambda}
\def \supp{\operatorname{supp}}
\def \sp{\operatorname{span}}
\def\ba{\mathbf a}
\def\bb{\mathbf b}
\def\bx{\mathbf x}
\def\by{\mathbf y}
\def\bz{\mathbf z}
\def\bk{\mathbf k}
\def\bu{\mathbf u}
\def\bm{\mathbf m}
\def\bp{\mathbf p}
\def\btt{\mathbf t}
\def\bW{\mathbf W}
\def\bH{\mathbf H}
\def\bB{\mathbf B}
\def\bK{\mathbf K}
\def\bt{\beta}
\newtheorem{Theorem}{Theorem}[section]
\newtheorem{Lemma}{Lemma}[section]
\newtheorem{Definition}{Definition}[section]
\newtheorem{Proposition}{Proposition}[section]
\newtheorem{Remark}{Remark}[section]
\newtheorem{Conjecture}{Conjecture}[section]
\numberwithin{equation}{section}
\newcommand{\be}{\begin{equation}}
\newcommand{\ee}{\end{equation}}
\begin{document}

\title{Remarks on numerical integration, discrepancy, and diaphony}
\author{V.N. Temlyakov\thanks{University of South Carolina and Steklov Institute of Mathematics.  }}
\maketitle
\begin{abstract}
{The goal of this paper is twofold. First, we present a unified way of formulating numerical integration problems from both approximation theory and discrepancy theory.   Second, we show how techniques, developed in approximation theory, work in proving lower bounds for recently developed new type of discrepancy -- 
the smooth discrepancy. }
\end{abstract}

\section{Introduction} 
\label{I} 

We study numerical integration. The goal is to obtain optimal rates of decay of errors 
of numerical integration for functions from a given function class. Theoretical aspects of the problem of numerical integration are intensely studied in approximation theory and in discrepancy theory. It is known (see, for instance, \cite{VT89}) that the problem of optimal error of numerical integration of functions with mixed smoothness and the problem of minimal discrepancy of point sets of fixed cardinality are closely related. 
The goal of this paper is twofold. First, we present a unified way of formulating numerical integration problems from both approximation theory and discrepancy theory. We present it in Sections \ref{I} and \ref{disc}. Mostly, these two sections contain 
known results and they can be considered as a survey. Second, we show how techniques, developed in approximation theory, work in proving lower bounds for 
the $r$-smooth $L_\bp$-discrepancy. These new results are presented in Sections
 \ref{L2} and \ref{Lp}. In Section \ref{D} we briefly discuss the lower bounds for different 
 types of discrepancy. 
 The main strategic point of Sections \ref{L2}, \ref{Lp}, and \ref{D} is to motivate the study of discrepancy for the whole range of smoothness from $r=1$, which corresponds to the classical setting, to arbitrarily large $r\in \N$. In Section \ref{nos}
 we move in other direction -- from classes of smoothness one to classes, for which we do not impose any smoothness assumptions. Following known results from \cite{VT89} and \cite{VT149}, we establish in Section \ref{nos} that even in such a general setting with no smoothness assumptions we can guarantee some rate of decay of errors of numerical integration.
 We hope that this paper will encourage researchers, working in 
 the discrepancy theory, to thoroughly study smooth discrepancy along with classical 
 discrepancy. 

We formulate the numerical integration problem in a general setting, which includes various discrepancy settings.  Numerical integration seeks good ways of approximating an integral
$$
\int_\Omega f(\bx)d\mu
$$
by an expression of the form
\be\label{1.1}
\La_m(f,\xi) :=\sum_{j=1}^m\la_jf(\xi^j),\quad \xi=(\xi^1,\dots,\xi^m),\quad \xi^j \in \Omega,\quad j=1,\dots,m. 
\ee
It is clear that we must assume that $f$ is integrable and defined at the points
 $\xi^1,\dots,\xi^m$. Expression (\ref{1.1}) is called a {\it cubature formula} $(\xi,\La)$ (if $\Omega \subset \R^d$, $d\ge 2$) or a {\it quadrature formula} $(\xi,\La)$ (if $\Omega \subset \R$) with knots $\xi =(\xi^1,\dots,\xi^m)$ and weights $\La:=(\la_1,\dots,\la_m)$. 
 
 Some classes of cubature formulas are of special interest. For instance, the Quasi-Monte Carlo cubature formulas, which have equal weights $1/m$, are important in applications. We use a special notation for these cubature formulas
 $$
 Q_m(f,\xi) :=\frac{1}{m}\sum_{j=1}^mf(\xi^j).
 $$
 
 The following class is a natural subclass of all cubature formulas. Let $B$ be a positive number and $Q(B,m)$ be the set of cubature formulas $\Lambda_m(\cdot,\xi)$ satisfying the additional condition
\be\label{1.2}
\sum_{\mu=1}^m |\lambda_\mu| \le B.
\ee
 
 For a function class $\bW$ we introduce a concept of error of the cubature formula $\La_m(\cdot,\xi)$ by
\be\label{1.3}
\La_m(\bW,\xi):= \sup_{f\in \bW} |\int_\Omega fd\mu -\La_m(f,\xi)|. 
\ee
The quantity $\La_m(\bW,\xi)$ is a classical characteristic of the quality of a given cubature formula $\La_m(\cdot,\xi)$. This setting is called {\it the worst case setting} in 
the Information Based Complexity. If the class $\bW=\{f(\bx,\by): \by \in Y\}$ is parametrized by a parameter $\by \in Y\subset \R^n$ with $Y$ being a bounded measurable set, then we can consider a natural {\it average case setting}. For $\bp =(p_1,\dots,p_n)$ define
\be\label{1.4}
\La_m(\bW,\xi,\bp):= \|\int_\Omega f(\cdot,\by)d\mu -\La_m(f(\cdot,\by),\xi)\|_\bp,
\ee
where the vector $L_\bp$ norm is taken with respect to the Lebesgue measure on $Y$. 
We write $\La_m(\bW,\xi,\infty):= \La_m(\bW,\xi)$. We are interested in dependence on $m$ of the quantities
$$
\kappa_m (\bW,\bp) := \inf_{ \lambda_1,\dots,\lambda_m; \xi^{1},\dots,
\xi^m}\Lambda_m(\bW,\xi,\bp)
$$
for different classes $\bW$. We begin with a rather general setting and consider 
particular examples later.  Let $1\le q\le \infty$. We define a set $\mathcal K_q$ of kernels possessing the following properties. 
 Let $K(\bx,\by)$ be a measurable function on $\Omega^1\times\Omega^2$.
 We assume that for any $\bx\in\Omega^1$ we have $K(\bx,\cdot)\in L_q(\Omega^2)$; for any $\by\in \Omega^2$ the $K(\cdot,\by)$ is integrable over $\Omega^1$ and $\int_{\Omega^1} K(\bx,\cdot)d\bx \in L_q(\Omega^2)$. For $1\le p\le \infty$ and a kernel $K\in \mathcal K_{p'}$, $p':=p/(p-1)$, we define the class
\be\label{1.5}
\bW^K_p :=\{f:f=\int_{\Omega^2}K(\bx,\by)\varphi(\by)d\by,\quad\|\varphi\|_{L_p(\Omega^2)}\le 1\}.  
\ee
Then each $f\in \bW^K_p$ is integrable on $\Omega^1$ (by Fubini's theorem) and defined at each point of $\Omega^1$. We denote for convenience
$$
 J_K(\by):=\int_{\Omega^1}K(\bx,\by)d\bx.
$$

For a cubature formula $\Lambda_m(\cdot,\xi)$ we have
$$
\Lambda_m(\bW^K_p,\xi) = \sup_{\|\varphi\|_{L_p(\Omega^2)}\le 1} |\int_{\Omega^2}\bigl( J_K(\by)-\sum_{\mu=1}^m\lambda_\mu K(\xi^\mu,\by)\bigr)\varphi(\by)d\by|=
$$
\be\label{1.6}
=\| J_K(\cdot)-\sum_{\mu=1}^m\lambda_\mu K(\xi^\mu,\cdot)\|_{L_{p'}(\Omega^2)}.
\ee
Consider a problem of numerical integration of functions  $K(\bx,\by)$, $\by\in\Omega^2$, with respect to  $\bx$,  $K\in {\mathcal K}_q$, in other words functions from the function class $\bK:=\{K(\bx,\by):\by\in\Omega^2\}$:
$$
{ \int_{\Omega^1} K(\bx,\by)d\bx - \sum_{\mu=1}^m \lambda_\mu K(\xi^\mu,\by)}.
$$

\begin{Definition}\label{D1.1}  $(K,q)$-discrepancy of a set of 
knots  $\xi^1,\dots,\xi^m$ and a set of weights  $\lambda_1,\dots,\lambda_\mu$ (a cubature formula  $(\xi,\Lambda)$) is
$$
 D(\xi,\Lambda,K,q):=\Lambda_m(\bK,\xi,q)=\|\int_{\Omega^1} K(\bx,\by)d\bx - \sum_{\mu=1}^m \lambda_\mu K(\xi^\mu,\by)\|_{L_q(\Omega^2)}.
$$
\end{Definition} 
In a special case $\La_m(\cdot,\xi) = Q_m(\cdot,\xi)$ we write $D(\xi,Q,K,q)$.
The above definition of the $(K,q)$-discrepancy and relation (\ref{1.6})  imply right a way the following relation
\be\label{1.7}
D(\xi,\Lambda,K,p') = \Lambda_m(\bW^K_p,\xi).
\ee
Relation (\ref{1.7}) shows that numerical integration in the class $\bW^K_p$ and the $(K,q)$-discrepancy are 
tied by the duality principle.

Let us consider a special case, when $K(\bx,\by)=F(\bx-\by)$, $\Omega^1=\Omega^2 = [0,1)^d$ and we deal with $1$-periodic in each variable functions. 
Associate with a cubature formula $(\xi,\La)$ and the function $F$ the following function
$$
g_{\xi,\Lambda,F}( \bx) := \sum_{ \bk}\Lambda(\xi,\bk)\hat F
( \bk)e^{2\pi i( \bk, \bx)}- \hat F(\mathbf 0),
$$
where
$$
\Lambda(\xi,\bk):= \La_m(e^{2\pi i(\bk,\bx)},\xi).
$$
Then for the quantity $\Lambda_m ( \bW_{p}^F ,\xi)$
we have ($p' := p/(p-1)$)
$$
\Lambda_m ( \bW_{p}^F ,\xi)=
\sup_{f\in \bW_{p}^F}
\bigl| \Lambda_m(f,\xi) -\hat f(\mathbf 0)\bigr|
$$
$$
=\sup_{\|\varphi\|_p\le 1} \bigl| \Lambda_m \bigl(F( \bx)\ast
\varphi( \bx),\xi\bigr) -\hat F(\mathbf 0)\hat\varphi(\mathbf 0)\bigr| 
$$
\be\label{1.8}
=\sup_{\|\varphi\|_p\le 1}\bigl|\<g_{\xi,\Lambda,F}(-\by),
\overline{\varphi(\by)}\>\bigr|=\|g_{\xi,\Lambda,F}\|_{p'}.
\ee
Let us discuss a special case of function $F$, which is very important in numerical
integration (see, for instance, \cite{TBook}, \cite{VT89}, and \cite{DTU}).  Let for $r>0$ 
 \be\label{1.9}
F_{r,\alpha}(x):= 1+2\sum_{k=1}^\infty k^{-r}\cos (2\pi kx-\alpha \pi/2).
\ee
For $\bx=(x_1,\dots,x_d)$, $\alpha=(\alpha_1,\dots,\alpha_d)$ denote
$$
F_{r,\alpha}(\bx) := \prod_{j=1}^d F_{r,\alpha_j}(x_j)
$$
and
$$
\bW^r_{p,\alpha} :=\bW^{F_{r,\alpha}}_p= \{f:f(\bx)=(F_{r,\alpha}\ast \varphi)(\bx)
$$
$$
:= \int_{[0,1)^d} F_{r,\alpha}(\bx-\by)\varphi(\by)d\by,\quad \|\varphi\|_p \le 1\}.
$$ 
  In the case of integer $r$ the class $\bW^r_{p,\alpha}$ with $\alpha=(r,\dots,r)$ is very close to the class of functions $f$, 
satisfying $\|f^{(r,\dots,r)}\|_p \le 1$, where $f^{(r,\dots,r)}$ is the mixed derivative of $f$ of order $rd$. 

It is easy to see that
\be\label{1.10}
\|g_{\xi,\Lambda,F_{r,\alpha}}\|_{2} = \left(\sum_{\bk\neq \mathbf 0}|\Lambda(\xi,\bk)|^2\left(\prod_{j=1}^d (\max(|k_j|,1))^{-r}\right)^2 + |\La(\xi,\mathbf 0)-1|^2\right)^{1/2} .
\ee
The above quantity in the case $r=1$ was introduced in \cite{Zi} under the name 
{\it diaphony}. In case of generic $r$ it was called {\it generalized diaphony} and was studied in \cite{Lev7}. Relation (\ref{1.8}) shows that generalized diaphony is closely related to numerical integration of the class $\bW^r_{2,\alpha}$. Following this analogy,
we can call the quantity $\|g_{\xi,\Lambda,F_{r,\alpha}}\|_{q}$ the $(r,q)$-diaphony of the pair $(\xi,\La)$ (the cubature formula $(\xi,\La)$).

\section{Discrepancy}
\label{disc}

We now describe some typical classes $\bW$, which are of interest in numerical integration and in discrepancy theory. We begin with a classical definition of discrepancy ("star discrepancy", $L_\infty$-discrepancy) of a point set $\xi := \{\xi^\mu\}_{\mu=1}^m\subset [0,1)^d$. 
Let $d\ge 2$ and $[0,1)^d$ be the $d$-dimensional unit cube. For convenience we sometimes use the notation $\Omega_d:=[0,1)^d$. For $\bx,\by \in [0,1)^d$ with $\bx=(x_1,\dots,x_d)$ and $\by=(y_1,\dots,y_d)$ we write $\bx < \by$ if this inequality holds coordinate-wise. For $\bx<\by$ we write $[\bx,\by)$ for the axis-parallel box $[x_1,y_1)\times\cdots\times[x_d,y_d)$ and define
$$
\cB:= \{[\bx,\by): \bx,\by\in [0,1)^d, \bx<\by\}.
$$

Introduce a class of special $d$-variate characteristic functions
$$
\chi^d := \{\chi_{[\mathbf 0,\bb)}(\bx):=\prod_{j=1}^d \chi_{[0,b_j)}(x_j),\quad b_j\in [0,1),\quad j=1,\dots,d\}
$$
where $\chi_{[a,b)}(x)$ is a univariate characteristic function of the interval $[a,b)$. 
The classical definition of discrepancy of a set $\xi$ of points $\{\xi^1,\dots,\xi^m\}\subset [0,1)^d$ is as follows
$$
D_\infty(\xi)  :=Q_m(\chi^d,\xi,\infty)= \max_{\bb\in [0,1)^d}\left|\prod_{j=1}^db_j -\frac{1}{m}\sum_{\mu=1}^m \chi_{[\mathbf 0,\bb)}(\xi^\mu)\right|.  
$$
The class $\chi^d$ is parametrized by the parameter $\bb\in [0,1)^d$. Therefore, we can define the $L_q$-discrepancy, $1\le q\le\infty$, of $\xi$ as follows
$$
D_q(\xi) :=   Q_m(\chi^d,\xi,q).
$$
In the above definitions the function class consists of characteristic functions, which have smoothness $1$ in the $L_1$ norm. In numerical integration it is natural to study 
function classes with arbitrary smoothness $r$. There are different generalizations of 
the above concept of discrepancy to the case of {\it smooth  discrepancy}. We discuss two of them here. In the definition of the first version of the $r$-discrepancy (see \cite{TBook}) instead of the characteristic function (this corresponds to $1$-discrepancy) we use the following function
\begin{align*}
B_r(\bx,\by)&:= \prod_{j=1}^d\bigl((r-1)!\bigr)^{-1}
(y_j - x_j )_+^{r-1},\\
  \bx,\by&\in\Omega_d,\qquad (a)_+ := \max (a,0).
\end{align*}
Denote
$$
\bB^{r,d}:=\{B_r(\bx,\by): \by \in \Omega_d\}.
$$
Then for a point set $\xi:=\{\xi^\mu\}_{\mu=1}^m$ of cardinality $m$ and weights $\Lambda:=\{\la_\mu\}_{\mu=1}^m$ we define the $r$-discrepancy of the pair $(\xi,\Lambda)$ by the formula
$$
D^r_q (\xi,\Lambda):=D(\xi,\Lambda,B_r,q)=\La_m(\bB^{r,d},\xi,q)
$$
\be\label{2.1}
 = \left \|\sum_{\mu=1}^{m}\lambda_{\mu}B_r (\xi^{\mu},\by)-
\prod_{j=1}^d (y_j^r /r!)\right\|_q  .
\ee

Consider the class
$ \dot{ \bW}_{p}^r  :=\bW^{B_r}_p$  consisting of the functions
$f(\bx)$ representable in the form
$$
f(\bx) =\int_{\Omega_d} B_r (\bx,\by)\varphi(\by) d \by,
\qquad \|\varphi\|_p \le 1.
$$
In connection with the definition of the class
$ \dot{ \bW}_{p}^r $ we remark here that
for the error of the cubature formula $(\xi,\Lambda)$ with
weights $\Lambda = (\lambda_1,\dots,\lambda_m)$ and knots
$\xi = (\xi^1,\dots,\xi^m)$ the following relation holds with  $p' := p/(p-1)$ (see (\ref{1.7}))
\be\label{2.2}
\Lambda_m\bigl( \dot{ \bW}_{p}^r ,\xi\bigr)=\left \|\sum_{\mu=1}^{m}\lambda_{\mu}B_r ( \btt,\xi^{\mu})-
\prod_{j=1}^d (t_j^r /r!)\right\|_{p'} =
D^r_{p'} (\xi,\Lambda)  .
\ee
Thus, errors of numerical integration of classes $\dot{ \bW}_{p}^r$ are dual to the average errors of numerical integration of classes $\bB^{r,d}$. 

Note that discrepancy $D_{\infty}(\xi) $ is equivalent within multiplicative constants, which may only depend on $d$, to the following quantity
\be\label{2.3}
D^1_\infty(\xi):=  \sup_{B\in\cB}\left|vol(B)-\frac{1}{m}\sum_{\mu=1}^m \chi_B(\xi^\mu)\right|,
\ee
where for $B=[\ba,\bb)\in \cB$ we denote $\chi_B(\bx):= \prod_{j=1}^d \chi_{[a_j,b_j)}(x_j)$. In particular, this implies that $D^1_{\infty} (\xi)$ is equivalent to its 
periodic analog, which we define below. For a function $f\in L_1(\R^d)$ with a compact support we define its periodization $\tilde f$ as follows
$$
\tilde f(\bx) := \sum_{\bm\in \Z^d} f(\bm+\bx).
$$
Define
$$
\tilde \chi^d :=\{ \tilde \chi_{[\ba,\ba+\bb)}(\bx): \ba,\bb \in \Omega_d\}.
$$
The class $\tilde \chi^d$ is parametrized by $\by =(\ba,\bb)\in \Omega_d\times\Omega_d \subset \R^{2d}$. Let $\bp=(p_1,\dots,p_1,p_2,\dots,p_2)$ 
with first $d$ coordinates $p_1$ and the rest $p_2$. Define
$$
\tilde D^1_\bp(\xi) :=   Q_m(\tilde \chi^d,\xi,\bp).
$$
Let us make some remarks about the above defined concepts of discrepancy. 
In the definition of discrepancy $D_p(\xi)$ the vertex $\mathbf 0$ plays a dominating role compared to other vertexes of the unit cube. For this reason discrepancy $D_p(\xi)$ is 
called {\it $L_p$-discrepancy for corners} (see \cite{Mat}) or {\it anchored discrepancy}. 
The definition of $D^1_\infty(\xi)$ and $\tilde D^1_\bp(\xi)$ are more symmetric with respect to vertexes of the unit cube. As we pointed out above in the case $\bp=\infty$ 
the quantities $D_\infty(\xi)$, $D^1_\infty(\xi)$ and $\tilde D^1_\infty(\xi)$ are equivalent.  However, as it is shown in \cite{Lev2} for $p=2$, the quantities $D_p(\xi)$ and $\tilde D^1_p(\xi)$ may behave differently (see \cite{Lev2} for detailed comparison of $D_2(\xi)$ and $\tilde D^1_2(\xi)$). In the definition of class $\tilde \chi^d$ parameters $\ba$ and $\bb$ play different roles. Parameter $\bb$ controls the shape of the support of the corresponding characteristic function. This parameter has the same role in the definition of $D_p(\xi)$. Parameter $\ba$ controls the shift of the characteristic function. In the case of function class $\chi^d$ there is no shift, $\ba=\mathbf 0$, and all characteristic functions are anchored at $\mathbf 0$. Thus, averaging over 
parameter $\ba$ -- the shift -- is a new feature of discrepancy $\tilde D^1_\bp(\xi)$. V. Lev (see, for instance, \cite{Lev7}), arguing that "it is this kind of discrepancy which was originally considered in the pioneering paper of Weyl" (see \cite{W}), suggests to call this type of discrepancy {\it Weyl discrepancy}. 

We now proceed to the $r$-smooth discrepancy. It is more convenient for us to consider the average setting in the periodic case. 
For $r=1,2,3,\dots$ we inductively define
$$
h^1(x,u):= \chi_{[-u/2,u/2)}(x), 
$$
$$
h^r(x,u) := h^{r-1}(x,u)\ast h^1(x,u),\qquad r=2,3,4,\dots,
$$
where
$$
f(x)\ast g(x) := \int_\R f(x-y)g(y)dy.
$$
Then $h^r(x,u)$ has smoothness $r$ in $L_1$ and has support $(-ru/2,ru/2)$. 
For a box $B$ represented in the form
$$
B= \prod_{j=1}^d [z_j-ru_j/2,z_j+ru/2)
$$
 define
$$
h^r_B(\bx):= h^r(\bx,\bz,\bu):=\prod_{j=1}^d h^r(x_j-z_j,u_j).
$$
 
Consider $\bu \in (0,\frac{1}{2}]^d$. Then for all $\bz\in [0,1)^d$ we have 
$$
\supp(h^r(\bx,\bz,\bu)) \subset (-r/4,1+r/4)^d.
$$
Now, for each $\bz\in [0,1)^d$ consider a periodization of function $h^r(\bx,\bz,\bu)$ 
in $\bx$ with period $1$ in each variable  
$\tilde h^r(\bx,\bz,\bu)$. Consider the class of periodic $r$-smooth hat functions
$$
\bH^{r,d}:=\{\tilde h^r(\bx,\bz,\bu): \bz\in[0,1)^d;\bu\in(0,1/2]^d\}.
$$

Define the corresponding {\it periodic $r$-smooth discrepancy}   as follows
$$
\tilde D^{r}_\infty(\xi,\La):=  \La_m(\bH^{r,d},\xi)
$$
\be\label{2.4}
   = \sup_{\bz\in[0,1)^d;\bu\in(0,1/2]^d}\left|\int_{[0,1)^d} \tilde h^r(\bx,\bz,\bu)d\bx- \sum_{\mu=1}^m \la_\mu \tilde h^r(\xi^\mu,\bz,\bu)\right|.
\ee

For $1\le p_1,p_2\le \infty$,  define the corresponding {\it periodic $r$-smooth $L_\bp$-discrepancy} (Weyl $r$-smooth $L_\bp$-discrepancy) as follows (see \cite{VT165} for the case $p=\infty$)
\be\label{2.5}
  \tilde D^{r}_{p_1,p_2}(\xi,\La):=
  \left\|\|\int_{[0,1)^d} \tilde h^r(\bx,\bz,\bu)d\bx- \sum_{\mu=1}^m \la_\mu \tilde h^r(\xi^\mu,\bz,\bu)\|_{p_1}\right\|_{p_2}
\ee
where the $L_{p_1}$ norm is taken with respect to $\bz$ over the unit cube $[0,1)^d$ and the $L_{p_2}$ norm is taken with respect to $\bu$ over the cube $(0,1/2]^d$. In the definition of $\tilde D^{r}_{p_1,p_2}(\xi,\La)$ parameters $\bz$ and $\bu$ play different roles. The most important parameter is $\bu$ -- it controls the shape of supports of the corresponding hat functions. It seems like the most natural value for parameter $p_2$ is $\infty$. In this case we obtain bounds uniform with respect to the shape and the size of supports of hat functions. Recently, it was noticed that the following subclasses of $\bH^{r,d}$ are of interest in studying dispersion (see \cite{VT163})
$$
\bH^{r,d}(v):=\{\tilde h^r(\bx,\bz,\bu): \bz\in[0,1)^d,\,\bu\in(0,1/2]^d,\, pr(\bu)=v\},
$$
where $pr(\bu):= \prod_{j=1}^d u_j$. We call the corresponding characteristics 
$$
\La_m(\bH^{r,d}(v),\xi,\infty), \qquad \La_m(\bH^{r,d}(v),\xi,(p,\infty))
$$
the {\it fixed volume $r$-smooth discrepancy} and {\it fixed volume $r$-smooth $L_p$-discrepancy} respectively.

\section{Lower bounds in case $p=2$}
\label{L2}

 \begin{Theorem}\label{T3.1} Let $r\in\N$. Then for any $(\xi,\La)$ we have
$$
    \tilde D^{r}_{2,2}(\xi,\La) \geq   C(r,d)   m^{-r}(\log   m)^{(d-1)/2},   \qquad
C(r,d)>0.
$$
\end{Theorem}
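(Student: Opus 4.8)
The plan is to reduce the $L_{2,2}$ smooth discrepancy to the $(r,2)$-diaphony of (\ref{1.10}) by an exact Fourier computation, and then to quote the classical lower bound for $L_2$-numerical integration of the mixed-smoothness class $\bW^r_{2,\alpha}$.

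First I would fix $\bu\in(0,1/2]^d$ and expand the inner discrepancy function
\[
g(\bz,\bu):=\int_{[0,1)^d}\tilde h^r(\bx,\bz,\bu)\,d\bx-\sum_{\mu=1}^m\lambda_\mu\tilde h^r(\xi^\mu,\bz,\bu)
\]
in a Fourier series in $\bz$. Since $h^r(\cdot,u)$ is the $r$-fold convolution of $\chi_{[-u/2,u/2)}$, the Fourier coefficients of the one--dimensional periodization $\tilde h^r(\cdot,u)$ are $\hat h^r(k,u)=\bigl(\sin(\pi k u)/(\pi k)\bigr)^r$ for $k\neq0$ and $\hat h^r(0,u)=u^r$. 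Writing $\tilde h^r(\xi^\mu,\bz,\bu)=\prod_j\tilde h^r(\xi^\mu_j-z_j,u_j)$ and collecting the exponentials in $\bz$, the Fourier coefficient of $g$ attached to the frequency $\bk$ has modulus $|\Lambda(\xi,\bk)|\prod_j|\hat h^r(k_j,u_j)|$ for $\bk\neq\mathbf 0$ and $|1-\Lambda(\xi,\mathbf 0)|\prod_j u_j^r$ for $\bk=\mathbf 0$, with $\Lambda(\xi,\bk)$ as in the Introduction (only the modulus enters the next step); note the integral term contributes solely to the $\bk=\mathbf 0$ coefficient. Parseval's identity then yields a closed form for $\|g(\cdot,\bu)\|_2^2$.

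Next I would integrate this Parseval expression over $\bu$ (the outer $L_2$-in-$\bu$ norm). The integral factors over coordinates, and the crux is the uniform two--sided estimate
\[
\int_0^{1/2}\Bigl(\frac{\sin(\pi k u)}{\pi k}\Bigr)^{2r}du\asymp(\max(|k|,1))^{-2r},
\]
valid for all $k\in\Z$ with constants depending only on $r$. This follows from the substitution $v=\pi k u$ together with the Wallis-type bounds $c_1(r)|k|\le\int_0^{\pi k/2}\sin^{2r}v\,dv\le c_2(r)|k|$ for the numerator; the coordinates with $k_j=0$ contribute the harmless constant factor $\int_0^{1/2}u^{2r}\,du$. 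Hence
\[
\tilde D^{r}_{2,2}(\xi,\La)^2\ge C(r,d)\Bigl(\sum_{\bk\neq\mathbf 0}|\Lambda(\xi,\bk)|^2\prod_{j=1}^d(\max(|k_j|,1))^{-2r}+|\Lambda(\xi,\mathbf 0)-1|^2\Bigr),
\]
and the right--hand parenthesis is precisely $\|g_{\xi,\Lambda,F_{r,\alpha}}\|_2^2$ from (\ref{1.10}), which by (\ref{1.8}) equals $\Lambda_m(\bW^r_{2,\alpha},\xi)^2$.

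Finally I would invoke the classical lower bound $\Lambda_m(\bW^r_{2,\alpha},\xi)\ge C(r,d)\,m^{-r}(\log m)^{(d-1)/2}$, valid for arbitrary weighted cubature formulas (see \cite{TBook}, \cite{VT89}), to conclude. The main obstacle is not the deep $(\log m)^{(d-1)/2}$ factor, which is entirely packaged in the known diaphony bound, but rather the bookkeeping of the exact reduction: verifying the uniform integral estimate above and tracking the mixed frequencies in which only some $k_j$ vanish, so that the reduction costs only a constant $C(r,d)$ and loses no logarithmic factors.
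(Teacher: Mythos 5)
Your proposal is correct and follows essentially the same route as the paper's proof: the identical Fourier expansion of the error in $\bz$, Parseval's identity, and integration over $\bu$ using the one-dimensional estimate $\int_0^{1/2}\bigl(\sin(\pi ku)/(\pi k)\bigr)^{2r}du \ge c(r)(\bar k)^{-2r}$. The only difference is cosmetic and lies in the endgame: the paper drops the $\bk=\mathbf 0$ term, applies Lemma \ref{L3.1} (with $2r$ in place of $r$), and handles the zero frequency by a separate dichotomy forcing $|\Lambda(\xi,\mathbf 0)|\ge c(r,d)>0$, whereas you retain the zero-frequency term, recognize the resulting expression as the generalized diaphony $\|g_{\xi,\Lambda,F_{r,\mathbf 0}}\|_2$ from (\ref{1.10}), and invoke the equivalent classical bound $\Lambda_m(\bW^r_{2,\alpha},\xi)\ge C(r,d)m^{-r}(\log m)^{(d-1)/2}$ (Theorem \ref{T4.1} with $p=2$, which rests on the same Lemma \ref{L3.1}), thereby packaging the dichotomy into the quoted result.
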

\begin{proof} We use a  notation
$$
     \La(\xi,\bk) := \Lambda_m(e^{i2\pi(\bk,\bx)},\xi) = \sum_{\mu=1}^m  \lambda_\mu
e^{i2\pi(\bk,\xi^\mu)}.
$$
We need a known result on the lower bound for the weighted sum of 
$\{|\La(\xi,\bk)|^2\}$ (see \cite{TBook} and \cite{VT89}).

\begin{Lemma}\label{L3.1} The following inequality is valid for any
$r > 1$
$$
\sum_{ \bk\ne 0}\bigl|\Lambda( \xi,\bk)\bigr|^2 pr
(\bar{ \bk})^{-r}\ge
C(r,d)\bigl|\Lambda(\xi,\mathbf 0)\bigr|^2 m^{-r}(\log m)^{d-1},
$$
where $\bar{k_j}:=\max(|k_j|,1)$ and $pr(\bar{\bk}):=\prod_{j=1}^d \bar{k_j}$.
\end{Lemma}

We have
$$
\delta(\bz,\bu):=  \sum_{\mu=1}^m \la_\mu \tilde h^r(\xi^\mu,\bz,\bu) -\int_{[0,1)^d} \tilde h^r(\bx,\bz,\bu)d\bx
$$
$$
=(\Lambda(\xi,\mathbf 0)-1)\hat{\tilde h}^r(\mathbf 0,\bz,\bu)+ \sum_{\bk\neq\mathbf 0} \La(\xi,\bk) \hat{\tilde h}^r(\bk,\bz,\bu).
$$
Taking into account the formula for a $1$-periodic $f(\bx)$
$$
\int_{[0,1)^d} f(\bx-\bz)e^{-2\pi i (\bk,\bx)} d\bx = e^{-2\pi i (\bk,\bz)}\int_{[0,1)^d} f(\bx)e^{-2\pi i (\bk,\bx)} d\bx,
$$
we obtain
\be\label{3.0}
\delta(\bz,\bu)=(\Lambda(\xi,\mathbf 0)-1)\hat{\tilde h}^r(\mathbf 0,\mathbf 0,\bu)+\sum_{\bk\neq\mathbf 0} \La(\xi,\bk) \hat{\tilde h}^r(\bk,\mathbf 0,\bu)e^{-2\pi i (\bk,\bz)}.
\ee
Therefore,
\be\label{3.1}
\|\delta(\cdot,\bu)\|_2^2 \ge \sum_{\bk\neq\mathbf 0} |\La(\xi,\bk) \hat{\tilde h}^r(\bk,\mathbf 0,\bu)|^2.
\ee
Next
\be\label{3.1'}
\hat {\tilde h}^r(0, 0, u)=u^r,\quad \hat {\tilde h}^r(k, 0, u)=\left(\frac{\sin(\pi ku)}{\pi k}\right)^r,\quad k\neq 0,
\ee
which implies  
$$
\int_0^{1/2}|\hat {\tilde h}^r(k, 0, u)|^2du \ge c(r)(\bar k)^{-2r}.
$$
Integrating the right hand side of (\ref{3.1}) with respect to $\bu$ over $(0,1/2]^d$ and using Lemma \ref{L3.1}  we get  
\be\label{3.2}
\int_{(0,1/2]^d}\|\delta(\cdot,\bu)\|_2^2d\bu \ge C(r,d)|\Lambda(\xi,\mathbf 0)|^2 m^{-2r} (\log m)^{d-1}. 
\ee
We have 
$$
|\int_{[0,1)^d} \delta(\bz,\bu)d\bz| = |(1-\Lambda(\xi,\mathbf 0))\hat{\tilde h}^r(\mathbf 0,\mathbf 0,\bu)|.
$$
Therefore, it is clear that it must be $|\Lambda(\xi,\mathbf 0)|\ge c(r,d)>0$. 
This combined with (\ref{3.2}) completes the proof of Theorem \ref{T3.1}.

\end{proof}

We now prove that Theorem \ref{T3.1} is sharp. 
\begin{Proposition}\label{P3.1} For $r\in \N$ there exists a cubature formula $(\xi,\La)$
such that 
$$
    \tilde D^{r}_{2,\infty}(\xi,\La) \le   C(r,d)   m^{-r}(\log   m)^{(d-1)/2},   \qquad
C(r,d)>0.
$$
\end{Proposition}
\begin{proof} We use the notation from the above proof of Theorem \ref{T3.1}. It follows from (\ref{3.0})   that for each $\bu$
\be\label{3.5}
\|\delta(\cdot,\bu)\|_2^2 = |(\Lambda(\xi,\mathbf 0)-1)\hat{\tilde h}^r(\mathbf 0,\mathbf 0,\bu)|^2+\sum_{\bk\neq\mathbf 0} |\La(\xi,\bk) \hat{\tilde h}^r(\bk,\mathbf 0,\bu)|^2.
\ee
This and (\ref{3.1'}) imply that for $\bu\in (0,1/2]^d$ (see (\ref{1.10}))
\be\label{3.6}
\|\delta(\cdot,\bu)\|_2 \le \|g_{\xi,\Lambda,F_{r,0}}\|_{2}.
\ee
By (\ref{1.8}) we obtain
\be\label{3.7}
\|g_{\xi,\Lambda,F_{r,0}}\|_{2} = \La_m(\bW^r_{2,0},\xi).
\ee
It is known (see \cite{Fro1}, \cite{By}, \cite{TBook}, and \cite{VT89}) that there exists a cubature formula (actually independent of $r$) based on the Frolov lattice such that
\be\label{3.8}
 \La_m(\bW^r_{2,0},\xi) \le C(r,d) m^{-r}(\log   m)^{(d-1)/2}.
 \ee
 Combining (\ref{3.6})--(\ref{3.8}) we complete the proof.

\end{proof}

We now show how Lemma \ref{L3.1} can be used to obtain a result similar to Theorem \ref{T3.1} for cubes instead of boxes of arbitrary shape. This result is in a style of 
results by Beck and Montgomery (their results correspond to the case $r=1$, see \cite{BC}, p. 132). 
Denote
$$
{\tilde h}^r(\bx,\mathbf z,u) :={\tilde h}^r(\bx,\mathbf z,(u,\dots,u)).
$$
Consider the following subclass of the class $\bH^{r,d}$
$$
\bH^{r,d,c}:= \{\tilde h^r(\bx,\bz,u): \bz\in [0,1)^d,\,   u\in (0,1/2]\}.
$$
Then the class $\bH^{r,d,c}$ is parametrized by $\bz\in [0,1)^d$ and $u\in (0,1/2]$. 
For $1\le p_1,p_2\le \infty$,  define the corresponding {\it periodic $r$-smooth $L_\bp$-discrepancy for cubes} (Weyl $r$-smooth $L_\bp$-discrepancy for cubes) as follows  
\be\label{3.9}
  \tilde D^{r,c}_{p_1,p_2}(\xi,\La):=
  \left\|\|\int_{[0,1)^d} \tilde h^r(\bx,\bz,u)d\bx- \sum_{\mu=1}^m \la_\mu \tilde h^r(\xi^\mu,\bz,u)\|_{p_1}\right\|_{p_2}
\ee
where the $L_{p_1}$ norm is taken with respect to $\bz$ over the unit cube $[0,1)^d$ and the $L_{p_2}$ norm is taken with respect to $u$ over the interval $(0,1/2]$. 

 \begin{Theorem}\label{T3.2} Let $r\in\N$. Then for any $(\xi,\La)$ we have
$$
    \tilde D^{r,c}_{2,2}(\xi,\La) \geq   C(r,d)   m^{-r}(\log   m)^{(d-1)/2},   \qquad
C(r,d)>0.
$$
\end{Theorem}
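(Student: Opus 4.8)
The plan is to run the proof of Theorem \ref{T3.1} essentially verbatim, the only structural change being that the scale is now the single variable $u\in(0,1/2]$ on the diagonal $\bu=(u,\dots,u)$ rather than the vector $\bu\in(0,1/2]^d$. Writing $\delta(\bz,u):=\sum_{\mu}\la_\mu\tilde h^r(\xi^\mu,\bz,u)-\int_{[0,1)^d}\tilde h^r(\bx,\bz,u)d\bx$ and noting that $\tilde D^{r,c}_{2,2}(\xi,\La)^2=\int_0^{1/2}\|\delta(\cdot,u)\|_2^2\,du$, the expansion (\ref{3.0}) and Parseval in $\bz$ give, exactly as in (\ref{3.1}),
$$
\int_0^{1/2}\|\delta(\cdot,u)\|_2^2\,du\ \ge\ \sum_{\bk\neq\mathbf 0}|\La(\xi,\bk)|^2\,I(\bk),\qquad I(\bk):=\int_0^{1/2}\prod_{j=1}^d|\hat{\tilde h}^r(k_j,0,u)|^2\,du .
$$
Thus everything reduces to comparing $I(\bk)$ with the weight $pr(\bar{\bk})^{-2r}$ appearing in Lemma \ref{L3.1}.

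The key estimate I would isolate as a lemma is $I(\bk)\ge c(r,d)\,pr(\bar{\bk})^{-2r}$ for all $\bk\neq\mathbf 0$. Using (\ref{3.1'}), split $\bk$ according to $S=\{j:k_j\neq0\}$ and $Z=\{j:k_j=0\}$; then $|\hat{\tilde h}^r(k_j,0,u)|^2$ equals $u^{2r}$ for $j\in Z$ and $(\pi|k_j|)^{-2r}\sin^{2r}(\pi k_j u)$ for $j\in S$, so that $I(\bk)=\pi^{-2r|S|}pr(\bar{\bk})^{-2r}J(\bk)$ with $J(\bk):=\int_0^{1/2}u^{2r|Z|}\prod_{j\in S}\sin^{2r}(\pi k_j u)\,du$. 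The lemma therefore amounts to the uniform bound $J(\bk)\ge c(r,d)>0$. For $Z=\emptyset$ this is transparent: the integrand is an even trigonometric polynomial in $u$ with integer frequencies, and since $\int_0^{1/2}\cos(2\pi n u)\,du=0$ for every nonzero integer $n$, the integral collapses to one half of the mean $\langle\prod_{j}\sin^{2r}(\pi k_j u)\rangle$, which is strictly positive.

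The main obstacle is precisely the uniformity in $\bk$ of this positive lower bound (and its analogue carrying the weight $u^{2r|Z|}$ when $Z\neq\emptyset$) -- this is exactly where the cube case genuinely differs from Theorem \ref{T3.1}, in which the $d$-dimensional integral factored into $d$ one-dimensional ones, each bounded below by $c(r)\bar k_j^{-2r}$ via the computation following (\ref{3.1'}). To prove uniformity I would argue by an equidistribution/Riemann--Lebesgue reduction: $J(\bk)>0$ for each fixed $\bk$ since the integrand is nonnegative and not identically zero (as $|S|\ge1$), while along any sequence with $|\bk|\to\infty$ the rapidly oscillating factors $\sin^{2r}(\pi k_j u)$ decorrelate from the smooth weight and from the bounded coordinates, so $J(\bk)$ tends to $\bigl(\int_0^{1/2}u^{2r|Z|}du\bigr)$ times a product of positive mean values; commensurable rays $\bk=n\bk^{(0)}$ are handled by the substitution exploiting the $1$-periodicity of $\prod_{j\in S}\sin^{2r}(\pi k_j^{(0)}\,\cdot\,)$. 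Since there are only finitely many support patterns $(S,Z)$, this yields $\inf_{\bk}J(\bk)=c(r,d)>0$. (Numerical evidence suggests the clean bound $\langle\prod_j\sin^{2r}(\pi k_j u)\rangle\ge(\binom{2r}{r}4^{-r})^d$, consistent via the Vandermonde identity $\binom{4r}{2r}=\sum_i\binom{2r}{i}^2$, but any positive $c(r,d)$ suffices.)

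With the key estimate in hand the proof closes as in Theorem \ref{T3.1}: by the estimate and Lemma \ref{L3.1} applied with exponent $2r$ (legitimate since $2r\ge2>1$),
$$
\int_0^{1/2}\|\delta(\cdot,u)\|_2^2\,du\ \ge\ c(r,d)\sum_{\bk\neq\mathbf 0}|\La(\xi,\bk)|^2 pr(\bar{\bk})^{-2r}\ \ge\ C(r,d)|\La(\xi,\mathbf 0)|^2 m^{-2r}(\log m)^{d-1}.
$$
Finally, exactly as after (\ref{3.2}), one shows $|\La(\xi,\mathbf 0)|\ge c(r,d)>0$: since $\int_{[0,1)^d}\delta(\bz,u)\,d\bz=(1-\La(\xi,\mathbf 0))u^{rd}$, if $|\La(\xi,\mathbf 0)|$ were small then $\int_0^{1/2}\|\delta(\cdot,u)\|_2^2\,du\ge\tfrac14\int_0^{1/2}u^{2rd}\,du$ would already dominate the claimed bound for large $m$. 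Taking square roots gives $\tilde D^{r,c}_{2,2}(\xi,\La)\ge C(r,d)m^{-r}(\log m)^{(d-1)/2}$.
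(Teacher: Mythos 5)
Your architecture is exactly the paper's: expand $\delta(\bz,u)$ in a Fourier series in $\bz$, use Parseval to get the diagonal analogue of (\ref{3.10}), reduce everything to the key estimate $\int_{(0,1/2]}|\hat{\tilde h}^r(\bk,\mathbf 0,u)|^2\,du\ge c(r,d)\,pr(\bar\bk)^{-2r}$ (this is precisely (\ref{3.11})), and then close via Lemma \ref{L3.1} (correctly applied with exponent $2r>1$) and the observation that $|\La(\xi,\mathbf 0)|$ must be bounded below; your treatment of that last step is fine. The genuine gap is in your proof of the key estimate, i.e.\ of the uniform bound $J(\bk)\ge c(r,d)>0$. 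The assertion that along any sequence with $|\bk|\to\infty$ the oscillating factors ``decorrelate'' is false without stratifying by resonance structure: for $\bk=(n,n,M)$ the first two factors stay perfectly correlated however large $n$ is, and in general the mean of $\prod_{j\in S}\sin^{2r}(\pi k_j u)$ picks up contributions whenever $\sum_j \epsilon_j m_j k_j=0$ with $1\le m_j\le r$; since the cosine coefficients in the expansion of $\sin^{2r}$ alternate in sign, these resonance contributions are not a priori positive. Your ``commensurable rays'' remark only covers sequences of the form $n\bk^{(0)}$ with $\bk^{(0)}$ fixed, and ``finitely many support patterns $(S,Z)$'' does not supply compactness: each pattern still contains infinitely many $\bk$, so pointwise positivity plus positivity of limits along rays does not yield $\inf_{\bk}J(\bk)>0$ without an induction over all resonance strata, which you neither formulate nor carry out; the clean bound $\langle\prod_j\sin^{2r}(\pi k_j u)\rangle\ge\bigl(\binom{2r}{r}4^{-r}\bigr)^{|S|}$ you propose is supported only by numerics.

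The paper sidesteps all of this with a two-line measure estimate: for $\eta\in(0,1/\pi)$ and $k\neq 0$ one has $|\{u\in(0,1/2]:\,|\sin(\pi k u)|\le \sin(\pi\eta)\}|\le C\eta$, uniformly in $k$. A union bound over the at most $d$ coordinates with $k_j\neq 0$, together with restricting $u$ to (say) $[1/4,1/2]$ so that the factors $u^{2r}$ coming from coordinates with $k_j=0$ are bounded below, leaves a set of $u$ of measure at least an absolute positive constant on which, by (\ref{3.1'}), every factor $|\hat{\tilde h}^r(k_j,0,u)|^2$ is at least $c(r)\,\bar{k}_j^{-2r}$; integrating over this set gives (\ref{3.11}) uniformly in $\bk$ at once. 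Replacing your equidistribution apparatus by this argument repairs the proof; everything else in your write-up matches the paper.
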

\begin{proof} We obtain from (\ref{3.5})
\be\label{3.10}
\|\delta(\cdot,u)\|_2^2=|(\Lambda(\xi,\mathbf 0)-1)\hat{\tilde h}^r(\mathbf 0,\mathbf 0,u)|^2+\sum_{\bk\neq\mathbf 0} |\La(\xi,\bk) \hat{\tilde h}^r(\bk,\mathbf 0,u)|^2 .
\ee
Note that for $\eta\in (0,1/\pi)$ and $k\neq 0$
$$
|\{u\in (0,1/2]:\, |\sin(\pi ku)| \le \sin(\pi \eta)\}| \le C\eta.
$$
Then, using (\ref{3.1'}), we obtain 
\be\label{3.11}
\int_{(0,1/2]}|\hat{\tilde h}^r(\bk,\mathbf 0,u)|^2du \ge C(r,d)pr(\bar\bk)^{-2r},\quad C(r,d)>0.
\ee
Therefore, Lemma \ref{L3.1} and (\ref{3.10}) imply the required inequality.
\end{proof}

Theorem \ref{T3.2} implies that for any cubature formula $(\xi,\La)$ there exists 
$\bz\in [0,1)^d$ and $u\in (0,1/2]$ such that the error of numerical integration of 
the function ${\tilde h}^r(\bx,\mathbf z,u)$ by the cubature formula $(\xi,\La)$ is 
greater than $C(r,d)   m^{-r}(\log   m)^{(d-1)/2}$. We now demonstrate that the anchored setting (when $\bz=\mathbf 0$) gives different results. For illustration purposes we 
only consider the simplest case $r=1$, when we numerically integrate the characteristics functions of cubes $\chi_{[\mathbf 0,\by)}(\bx)$ with $\by = (y,\dots,y)$, $y\in [0,1)$. Consider the function  
$$
w=f(y):= \int_{[0,1)^d} \chi_{[\mathbf 0,(y,\dots,y))}(\bx)d\bx = y^d,\quad y\in [0,1).
$$
For $k=1,\dots,m+1$ define
$$
w_k := \frac{k}{m+1},\quad y_k := (w_k)^{1/d}=\left(\frac{k}{m+1}\right)^{1/d}.
$$
Define the cubature formula $(\xi,\La)$ in the following way
$$
\xi^k:= (y_k,0,\dots,0),\quad \la_k := \frac{1}{m+1},\quad k=1,\dots,m.
$$
Let $y\in [y_n,y_{n+1})$ with some $1\le n \le m$. Then 
$$
|\La_m(\chi_{[\mathbf 0,(y,\dots,y))},\xi)- f(y)| \le w_{n+1}-w_n = \frac{1}{m+1}.
$$
This shows that the optimal error of numerical integration of characteristic functions of anchored cubes is better than the one for shifted cubes.

\section{A lower bound in case $1<p<2$}
\label{Lp}

We remind some known results, which we use in this section. 
 The following theorem is proved in \cite{VT43} (see also \cite{TBook} and \cite{VT89}).
\begin{Theorem}\label{T4.1} The following lower estimate is valid for any
cubature formula $(\xi,\Lambda)$ with $m$ knots $(r > 1/p)$
$$
\Lambda_m( \bW_{p,\alpha}^r,\xi) \ge C(r,d,p)m^{-r}
(\log m)^{\frac{d-1}{2}},\qquad 1 \le p < \infty .
$$
\end{Theorem}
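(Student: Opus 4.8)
The plan is to prove this lower bound for the smooth periodic classes $\bW^r_{p,\alpha}$ by the standard Fourier-analytic / test-function method from approximation theory, reducing everything to the key Lemma \ref{L3.1}. Recall from (\ref{1.8}) that $\La_m(\bW^r_{p,\alpha},\xi) = \|g_{\xi,\Lambda,F_{r,\alpha}}\|_{p'}$, where $p' = p/(p-1) > 2$ since $1 \le p < 2$; the case $p \ge 2$ follows from monotonicity of the $L_{p'}$ norm in $p'$ (decreasing in $p'$, hence a bound for small $p'$ gives the larger norms), so the substance is the range $1 < p < 2$. Since $g_{\xi,\Lambda,F_{r,\alpha}}$ is supported on trigonometric frequencies, the idea is to pick a well-chosen band of frequencies $\bk$ and extract from $\|g\|_{p'}$ a lower bound controlled by the weighted quadratic sum $\sum_{\bk \neq \mathbf 0} |\La(\xi,\bk)|^2 pr(\bar\bk)^{-2r}$ that Lemma \ref{L3.1} bounds below.

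First I would fix a dyadic scale. For an integer $n$ (to be chosen of order $\log m$), consider the hyperbolic-cross layer $\rho(\bs) := \{\bk : 2^{s_j-1} \le |k_j| < 2^{s_j}\}$ for multi-indices $\bs$ with $\|\bs\|_1 = s_1 + \cdots + s_d = n$. On each such block $pr(\bar\bk) \asymp 2^n$, so the Fourier coefficients of $F_{r,\alpha}$ there satisfy $|\hat F_{r,\alpha}(\bk)| \asymp pr(\bar\bk)^{-r} \asymp 2^{-nr}$. The plan is to build, for each block, a trigonometric polynomial $t_\bs$ that behaves like a smoothed Dirichlet-type kernel localized to $\rho(\bs)$ and acts as a dual test function against $g_{\xi,\Lambda,F_{r,\alpha}}$. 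Summing these across the $\asymp n^{d-1}$ blocks with $\|\bs\|_1 = n$ and using the $L_p$--$L_{p'}$ duality pairing $|\langle g, \psi \rangle| \le \|g\|_{p'}\|\psi\|_p$, one transfers the problem to estimating $\|\psi\|_p$ from above and the pairing $\langle g, \psi\rangle$ from below.

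The pairing from below is where Lemma \ref{L3.1} enters: the inner product picks out $\sum_{\bk} \La(\xi,\bk)\hat F_{r,\alpha}(\bk)\,\overline{\hat\psi(\bk)}$, and choosing $\psi$ so that its coefficients reinforce the weights $|\La(\xi,\bk)|^2 pr(\bar\bk)^{-r}$ on the hyperbolic cross $\sum_{\|\bs\|_1 \le n}\rho(\bs)$ yields, via Lemma \ref{L3.1}, a lower bound $\gtrsim |\La(\xi,\mathbf 0)|^2 m^{-r}(\log m)^{d-1}$ with $n \asymp \log m$ so that $2^n \asymp m$. As in the proof of Theorem \ref{T3.1} one argues separately that $|\La(\xi,\mathbf 0)| \ge c(r,d) > 0$, since otherwise the cubature formula fails to integrate constants. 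The upper bound $\|\psi\|_p \lesssim$ (an appropriate power of $2^n$ and $n^{d-1}$) requires the $L_p$ norm estimates for sums of block polynomials on the hyperbolic cross; these are exactly the Littlewood--Paley / small-ball type inequalities for mixed smoothness established in \cite{TBook} and \cite{VT89}, and one must track the exponents carefully so that the ratio (lower bound on pairing) over (upper bound on $\|\psi\|_p$) produces the claimed $m^{-r}(\log m)^{(d-1)/2}$.

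The main obstacle I expect is obtaining the correct power $(\log m)^{(d-1)/2}$ rather than the naive $(\log m)^{d-1}$ that comes straight out of Lemma \ref{L3.1}. The loss of half the logarithmic power is the signature of the $L_p$ theory for $p < 2$ and is governed by the geometry of the hyperbolic cross: there are $\asymp n^{d-1}$ blocks at level $n$, and the interaction between the $\ell^2$ gain from Lemma \ref{L3.1} and the $L_p$ norms of the combined test polynomial must be balanced to convert one full logarithmic factor into its square root. Concretely, one typically introduces random signs on the blocks (a Rudin--Shapiro or Khintchine-type argument) to control $\|\psi\|_p$ efficiently, and verifying that such a choice simultaneously preserves the lower bound on the pairing is the delicate point. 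This is precisely the technical heart of the argument in \cite{VT43}, and the rest is bookkeeping with the Fourier coefficient asymptotics in (\ref{1.9}).
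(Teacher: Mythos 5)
Your reduction between the ranges of $p$ runs in the wrong direction, and this is a fatal gap. On the unit cube (a probability space) the $L_q$ norm is \emph{increasing} in $q$: $\|g\|_{q_1}\le \|g\|_{q_2}$ for $q_1\le q_2$. Consequently a lower bound for $\|g_{\xi,\La,F_{r,\alpha}}\|_{p'}$ with $p'\ge 2$ (your range $1< p< 2$) implies nothing for $p'<2$ (the range $p>2$), so the case $p\ge 2$ does \emph{not} ``follow from monotonicity''--- it is exactly the hard case. Meanwhile the range you call ``the substance,'' $1<p<2$, is the easy one: there $p'\ge 2$, hence $\|g\|_{p'}\ge \|g\|_2$, and by Parseval (see (\ref{1.10})) $\|g\|_2^2=\sum_{\bk\neq\mathbf 0}|\La(\xi,\bk)|^2\,pr(\bar\bk)^{-2r}+|\La(\xi,\mathbf 0)-1|^2$, so Lemma \ref{L3.1} applied with $2r$ in place of $r$ (legitimate there, since $r>1/p>1/2$), together with the observation $|\La(\xi,\mathbf 0)|\ge c>0$ exactly as in the proof of Theorem \ref{T3.1}, already yields $m^{-r}(\log m)^{(d-1)/2}$ --- no blocks, no dual test function, no random signs. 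This also exposes a second misunderstanding: the exponent $(d-1)/2$ is not obtained by ``converting'' the $(\log m)^{d-1}$ of Lemma \ref{L3.1} into its square root via a Rudin--Shapiro/Khintchine device; it appears simply because Lemma \ref{L3.1} bounds the \emph{square} of the $L_2$ norm, and one takes a square root. Relatedly, your claimed pairing lower bound $\gtrsim |\La(\xi,\mathbf 0)|^2 m^{-r}(\log m)^{d-1}$ is dimensionally inconsistent with the target $m^{-r}(\log m)^{(d-1)/2}\|\psi\|_p$, which signals that the bookkeeping you defer was never balanced.

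For the genuinely hard case $2\le p<\infty$ (i.e.\ $1<p'\le 2$) duality is indeed the right tool: one must produce $\psi\in L_p$ with $|\langle g,\psi\rangle|\ge C\,m^{-r}(\log m)^{(d-1)/2}\,\|\psi\|_p$, and constructing such a $\psi$ and controlling its $L_p$ norm is the entire content of the theorem. Your sketch gestures at hyperbolic-cross blocks and Littlewood--Paley inequalities but never constructs $\psi$, never verifies the pairing bound, and explicitly defers ``the technical heart'' to \cite{VT43}. Note that the paper itself offers no proof here either --- Theorem \ref{T4.1} is quoted from \cite{VT43} --- so a blind proof attempt is judged precisely on whether it supplies that missing construction; yours does not, and the one range it does treat in detail is the range that needs none of this machinery.
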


Thus, (\ref{1.8}) and Theorem \ref{T4.1} imply the following lemma.

\begin{Lemma}\label{L4.1} Let $1<p<\infty$ and $r>1-1/p$. Then for any $(\xi,\Lambda)$ we have
$$
\|g_{\xi,\Lambda,F_{r,\mathbf 0}}\|_{p} \ge C(r,d,p)m^{-r}
(\log m)^{\frac{d-1}{2}}.
$$
\end{Lemma}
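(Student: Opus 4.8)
The plan is to read this off as a pure duality statement: the quantity $\|g_{\xi,\Lambda,F_{r,\mathbf 0}}\|_p$ is, by (\ref{1.8}), nothing but the worst-case numerical integration error over a Sobolev-type class, and the desired lower bound is then supplied by Theorem \ref{T4.1}. The only substantive point is the bookkeeping of conjugate exponents, which is precisely why the hypothesis of the lemma reads $r>1-1/p$ rather than $r>1/p$.

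First I would apply relation (\ref{1.8}) to the kernel $F=F_{r,\mathbf 0}$. That relation asserts $\Lambda_m(\bW^{F}_q,\xi)=\|g_{\xi,\Lambda,F}\|_{q'}$ with $q'=q/(q-1)$; since I want $\|g_{\xi,\Lambda,F_{r,\mathbf 0}}\|_p$ to appear on the right-hand side, I must choose the integrability exponent of the class to be $q=p'$, so that $q'=(p')'=p$. This gives the identity
$$
\|g_{\xi,\Lambda,F_{r,\mathbf 0}}\|_{p}=\Lambda_m(\bW^{r}_{p',\mathbf 0},\xi),
$$
where $\bW^{r}_{p',\mathbf 0}=\bW^{F_{r,\mathbf 0}}_{p'}$ is the class (\ref{1.5}) with kernel $F_{r,\mathbf 0}$ and integrability exponent $p'$.

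It then remains to bound the right-hand side from below by Theorem \ref{T4.1}, applied with its integrability parameter equal to $p'$ and with $\alpha=\mathbf 0$. I would verify the two hypotheses in the dual variable: from $1<p<\infty$ one gets $1<p'<\infty$, so the range requirement $1\le p'<\infty$ holds; and the smoothness requirement $r>1/p'$ becomes, after substituting $1/p'=1-1/p$, exactly the assumed $r>1-1/p$. Thus Theorem \ref{T4.1} yields $\Lambda_m(\bW^{r}_{p',\mathbf 0},\xi)\ge C(r,d,p')m^{-r}(\log m)^{(d-1)/2}$, and renaming the constant (legitimate since $p'$ is a fixed function of $p$) gives the claim. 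I do not expect a genuine obstacle, as the statement is flagged as a consequence of (\ref{1.8}) and Theorem \ref{T4.1}; the single point needing care is this dualization of the exponent, together with the observation that Theorem \ref{T4.1} carries a constant independent of the shift $\alpha$, so that the case $\alpha=\mathbf 0$ is indeed covered.
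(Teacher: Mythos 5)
Your proof is correct and is exactly the argument the paper intends: Lemma \ref{L4.1} is stated there as an immediate consequence of the duality relation (\ref{1.8}) (applied with class exponent $p'$ so that the $g$-function norm comes out in $L_p$) together with Theorem \ref{T4.1} applied with integrability parameter $p'$, where the hypothesis $r>1/p'$ translates into $r>1-1/p$. Your bookkeeping of the conjugate exponents and the observation that the constant in Theorem \ref{T4.1} covers $\alpha=\mathbf 0$ match the paper's route precisely.
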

Note, that it follows from the definition of $F_{r,\alpha}$ that
$\hat F_{r,\mathbf 0}(\bk) = \prod_{j=1}^d(\bar k_j)^{-r}$. Let $c$ be a positive constant. Denote $k^c$ to be 
$c$ if $k=0$ and $k^c=k$ otherwise, $\bk^c:=(k_1^c,\dots,k_d^c)$. It is not difficult to see that Lemma \ref{L4.1} implies the following lemma.

\begin{Lemma}\label{L4.2}   Define
$$
g_{\xi,\Lambda,r}^c( \bx) := \sum_{ \bk\neq \mathbf 0}\Lambda(\xi, \bk)\left(\prod_{j=1}^d
( k_j^c)^{-r}\right)e^{2\pi i( \bk, \bx)}+c^{-rd}(\Lambda(\xi,\mathbf 0)- 1).
$$
Let $1<p<\infty$ and $r>1-1/p$. Then for any $(\xi,\Lambda)$ we have 
$$
\|g_{\xi,\Lambda,r}^c\|_{p} \ge C(r,d,p,c)m^{-r}
(\log m)^{\frac{d-1}{2}} .
$$
\end{Lemma}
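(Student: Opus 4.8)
The plan is to realize $g_{\xi,\Lambda,F_{r,\mathbf 0}}$ as the image of $g_{\xi,\Lambda,r}^c$ under a Fourier multiplier operator that is bounded on $L_p(\Td)$ with norm depending only on $r,d,p,c$, and then to transfer the lower bound of Lemma \ref{L4.1}. Comparing Fourier coefficients and using $\hat F_{r,\mathbf 0}(\bk)=\prod_{j=1}^d(\bar k_j)^{-r}$ together with $\hat F_{r,\mathbf 0}(\mathbf 0)=1$, I would first record that for $\bk\ne\mathbf 0$ the $\bk$-th coefficient of $g_{\xi,\Lambda,F_{r,\mathbf 0}}$ equals $\La(\xi,\bk)\prod_j(\bar k_j)^{-r}$ and that of $g_{\xi,\Lambda,r}^c$ equals $\La(\xi,\bk)\prod_j(k_j^c)^{-r}$, while at $\bk=\mathbf 0$ the coefficients are $\Lambda(\xi,\mathbf 0)-1$ and $c^{-rd}(\Lambda(\xi,\mathbf 0)-1)$ respectively.

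Forming the symbol $M(\bk):=\hat g_{\xi,\Lambda,F_{r,\mathbf 0}}(\bk)/\hat g^c_{\xi,\Lambda,r}(\bk)$, the factor $\La(\xi,\bk)$ cancels and $M$ becomes independent of $(\xi,\Lambda)$. Since $\bar k_j=|k_j|=|k_j^c|$ when $k_j\ne 0$, while $\bar k_j=1$ and $|k_j^c|=c$ when $k_j=0$, the symbol collapses to $M(\bk)=\prod_{j:\,k_j=0}c^{r}$, i.e. $M(\bk)=\prod_{j=1}^d\mu(k_j)$ with $\mu(0)=c^{r}$ and $\mu(k)=1$ for $k\ne 0$; this also covers $\bk=\mathbf 0$, where $M(\mathbf 0)=c^{rd}$, consistently. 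Defining $T_M$ to be the operator with symbol $M$, one checks coefficient by coefficient (including where $\La(\xi,\bk)=0$, so no genuine division is needed) that $g_{\xi,\Lambda,F_{r,\mathbf 0}}=T_M\, g^c_{\xi,\Lambda,r}$.

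It remains to see that $T_M$ is bounded on $L_p(\Td)$. The key point is that $T_M$ factors as the composition $\prod_{j=1}^d\bigl(I+(c^{r}-1)P_j\bigr)$, where $P_j f:=\int_0^1 f\,dx_j$ is the projection onto the Fourier modes with $k_j=0$ (averaging in the $j$-th variable): the factor $I+(c^{r}-1)P_j$ multiplies modes with $k_j=0$ by $c^r$ and leaves the others fixed. Each $P_j$ is a conditional expectation, hence a contraction on $L_p(\Td)$ for every $1\le p\le\infty$, so $\|I+(c^{r}-1)P_j\|_{L_p\to L_p}\le 1+|c^{r}-1|$ and therefore $\|T_M\|_{L_p\to L_p}\le(1+|c^{r}-1|)^{d}=:C(r,d,c)$. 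Combining, $\|g_{\xi,\Lambda,F_{r,\mathbf 0}}\|_p\le C(r,d,c)\,\|g^c_{\xi,\Lambda,r}\|_p$, and Lemma \ref{L4.1} then yields the claim with $C(r,d,p,c)=C(r,d,c)^{-1}C(r,d,p)$.

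The one place that deserves care — and essentially the only obstacle — is the reading of $(k_j^c)^{-r}$ for negative $k_j$. With the magnitude reading $|k_j|^{-r}$, the symbol $M$ is the real, finitely valued multiplier above and the averaging-projection argument suffices for all $1\le p\le\infty$, so that the restriction $1<p<\infty$ enters only through Lemma \ref{L4.1}. If instead $(k_j^c)^{-r}$ is interpreted as a complex power, $M$ acquires unimodular $\sign(k_j)$-type (Riesz) factors in the coordinates with $k_j<0$; their $L_p$-boundedness is exactly the M. Riesz theorem and holds precisely for $1<p<\infty$, which is consistent with the stated hypothesis. In either case the constant stays independent of $(\xi,\Lambda)$ and of $m$, which is what makes the transfer of the lower bound legitimate.
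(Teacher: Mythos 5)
Your proposal is correct and follows exactly the route the paper intends: the paper states Lemma \ref{L4.2} as an immediate consequence of Lemma \ref{L4.1} (``it is not difficult to see''), and your multiplier transfer --- dividing out the symbols, factoring $T_M$ through the averaging projections $I+(c^r-1)P_j$ (plus Riesz/conjugate-function factors for the sign-carrying reading, which is where $1<p<\infty$ enters beyond Lemma \ref{L4.1}) --- is precisely the kind of bounded-multiplier comparison the author uses elsewhere in the same section, e.g.\ the operators $T_j$ and $M_{u,j}$ in the proof of Lemma \ref{L4.3}. Your care with the $\bk=\mathbf 0$ coefficient ($c^{rd}$ matching $M(\mathbf 0)$) and with avoiding genuine division where $\La(\xi,\bk)=0$ makes the deduction complete.
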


We now prove an analog of Theorem \ref{T3.1} for $p>1$. 

 \begin{Theorem}\label{T4.2} Let $r\in\N$ be an even number. Then for any $(\xi,\La)$ we have for $1<p<\infty$
$$
    \tilde D^{r}_{p,1}(\xi,\La) \geq   C(r,d,p)   m^{-r}(\log   m)^{(d-1)/2},   \qquad
C(r,d,p)>0.
$$
\end{Theorem}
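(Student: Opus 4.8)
The plan is to average the error function over the shape parameter $\bu$ and to recognize the resulting function of $\bz$ as a constant multiple of the function $g^c_{\xi,\La,r}$ appearing in Lemma \ref{L4.2}, whose $L_p$ norm is already bounded below there. Since the outer ($p_2=1$) norm in (\ref{2.5}) is simply integration in $\bu$, I would start from the identity $\tilde D^{r}_{p,1}(\xi,\La)=\int_{(0,1/2]^d}\|\delta(\cdot,\bu)\|_p\,d\bu$, where $\delta(\bz,\bu)$ is the error function from the proof of Theorem \ref{T3.1}, with the Fourier representation (\ref{3.0}) in the variable $\bz$.

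First I would set $G(\bz):=\int_{(0,1/2]^d}\delta(\bz,\bu)\,d\bu$ and apply Minkowski's integral inequality to get $\|G\|_p\le\int_{(0,1/2]^d}\|\delta(\cdot,\bu)\|_p\,d\bu=\tilde D^{r}_{p,1}(\xi,\La)$. Using (\ref{3.0}) and integrating term by term, $G$ is a trigonometric series in $\bz$ whose coefficient at frequency $\bk\ne\mathbf 0$ equals $\La(\xi,\bk)$ times $\int_{(0,1/2]^d}\hat{\tilde h}^r(\bk,\mathbf 0,\bu)\,d\bu$, and whose constant term is $(\La(\xi,\mathbf 0)-1)\int_{(0,1/2]^d}\hat{\tilde h}^r(\mathbf 0,\mathbf 0,\bu)\,d\bu$. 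By (\ref{3.1'}) these integrals factor over coordinates, so the whole computation reduces to the one-dimensional integrals $\int_0^{1/2}\hat{\tilde h}^r(k,0,u)\,du$.

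The crux is the evaluation of these one-dimensional integrals, and this is exactly where the evenness of $r$ enters. For $k\ne0$ one has $\hat{\tilde h}^r(k,0,u)=(\pi k)^{-r}(\sin\pi k u)^r$; since $r$ is even, $(\sin\pi k u)^r$ expands as a finite combination $a_0+\sum_{l=1}^{r/2}a_l\cos(2\pi k l u)$ with $a_0=\binom{r}{r/2}2^{-r}>0$, and each $\int_0^{1/2}\cos(2\pi k l u)\,du$ vanishes for integer $kl\ge1$. Hence $\int_0^{1/2}\hat{\tilde h}^r(k,0,u)\,du=\tfrac{a_0}{2}\,\pi^{-r}k^{-r}$, a fixed positive constant times $k^{-r}$ independent of $k$, while for $k=0$ the integral is $\int_0^{1/2}u^r\,du=2^{-(r+1)}/(r+1)$. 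Writing $a_0':=a_0\pi^{-r}/2$ and choosing the free constant $c$ of Lemma \ref{L4.2} so that $a_0'c^{-r}=2^{-(r+1)}/(r+1)$, each coordinate contributes precisely $a_0'(k_j^c)^{-r}$ (here $(k_j^c)^{-r}=|k_j|^{-r}$ for $k_j\ne0$, again because $r$ is even), so $\int_{(0,1/2]^d}\hat{\tilde h}^r(\bk,\mathbf 0,\bu)\,d\bu=(a_0')^d\prod_{j=1}^d(k_j^c)^{-r}$ for every $\bk$, and the $\bk=\mathbf 0$ term lines up with the $c^{-rd}$ term.

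Comparing with the definition of $g^c_{\xi,\La,r}$, this identifies $G(\bz)=(a_0')^d\,g^c_{\xi,\La,r}(-\bz)$, whence $\|G\|_p=(a_0')^d\|g^c_{\xi,\La,r}\|_p$ by reflection invariance of the $L_p$ norm on the torus. Since $r\in\N$ even forces $r\ge2>1-1/p$, Lemma \ref{L4.2} applies and gives $\|g^c_{\xi,\La,r}\|_p\ge C(r,d,p)m^{-r}(\log m)^{(d-1)/2}$; chaining this with $\|G\|_p\le\tilde D^{r}_{p,1}(\xi,\La)$ yields the claim (the constant $c$ depends only on $r$, so it is absorbed). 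I expect the main obstacle to be the clean evaluation of the averaged multiplier, namely verifying that the flat average over $\bu$ converts the oscillatory factor $(\sin\pi k u)^r$ into a $k$-independent positive constant (which is what forces $r$ to be even) and that the anomalous $\bk=\mathbf 0$ contribution matches the adjustable constant $c$ of Lemma \ref{L4.2}; the remaining steps—termwise integration and Minkowski's inequality—are routine.
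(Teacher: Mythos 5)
Your proposal is correct and follows essentially the same route as the paper's own proof: averaging $\delta(\bz,\bu)$ over $\bu\in(0,1/2]^d$, evaluating $\int_0^{1/2}(\sin\pi ku)^r\,du$ via the even-$r$ trigonometric expansion to get a $k$-independent constant times $k^{-r}$, identifying the averaged function with a constant multiple of $g^c_{\xi,\La,r}$ for a suitable $c=c(r)$, and combining Lemma \ref{L4.2} with the Minkowski integral inequality $\|\int\delta(\cdot,\bu)\,d\bu\|_p\le\int\|\delta(\cdot,\bu)\|_p\,d\bu$. Your explicit matching of the $\bk=\mathbf 0$ and mixed-zero-coordinate terms to the constant $c$ is exactly the role that the paper's phrase ``with some $c=c'(r)$'' leaves implicit.
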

\begin{proof} As above we use the error function (see (\ref{3.0}))
$$
\delta(\bz,\bu)=(\Lambda(\xi,\mathbf 0)-1)\hat{\tilde h}^r(\mathbf 0,\mathbf 0,\bu)+\sum_{\bk\neq\mathbf 0} \La(\xi,\bk) \hat{\tilde h}^r(\bk,\mathbf 0,\bu)e^{-2\pi i (\bk,\bz)}.
$$
Consider a function
$$
f(\bz):= \int_{(0,1/2]^d}\delta(\bz,\bu)d\bu =(\Lambda(\xi,\mathbf 0)-1)\int_{(0,1/2]^d}\hat{\tilde h}^r(\mathbf 0,\mathbf 0,\bu)d\bu
$$
\be\label{4.1}
    +\sum_{\bk\neq\mathbf 0} \La(\xi,\bk) e^{-2\pi i (\bk,\bz)}\int_{(0,1/2]^d}\hat{\tilde h}^r(\bk,\mathbf 0,\bu)d\bu.
\ee
Next
\be\label{4.1'}
\hat {\tilde h}^r(0, 0, u)=u^r,\quad \hat {\tilde h}^r(k, 0, u)=\left(\frac{\sin(\pi ku)}{\pi k}\right)^r,\quad k\neq 0,
\ee
which implies  for $k\neq 0$ and even $r$
$$
\int_0^{1/2}\hat {\tilde h}^r(k, 0, u)du = (\pi k)^{-r}\int_0^{1/2} ((1-\cos 2\pi ku)/2)^{r/2}du = (\pi k)^{-r} c(r).
$$
Thus, 
$$
f(\bz) = (\pi^{-r}c(r))^d g_{\xi,\Lambda,r}^c(-\bz)
$$
with some $c=c'(r)$. Therefore, by Lemma \ref{L4.2} we get
\be\label{4.2}
\|f\|_p \ge C(r,d,p)m^{-r}(\log m)^{\frac{d-1}{2}}. 
\ee
Further,
$$
\tilde D^{r}_{p,1}(\xi,\La)=\int_{(0,1/2]^d}\|\delta(\cdot,\bu)\|_pd\bu
\ge  \left\|\int_{(0,1/2]^d} \delta(\cdot,\bu)d\bu\right\|_p =  \|f\|_p.
$$
It remains to use (\ref{4.2}).
\end{proof}

 We now prove that the lower bound in Theorem \ref{T4.2} is sharp. The following 
 result is an extension of Proposition \ref{P3.1}. 
 \begin{Proposition}\label{P4.1} For $r\in \N$ and $1<p<\infty$ there exists a cubature formula $(\xi,\La)$
such that 
$$
    \tilde D^{r}_{p,\infty}(\xi,\La) \le   C(r,p,d)   m^{-r}(\log   m)^{(d-1)/2}.
$$
\end{Proposition}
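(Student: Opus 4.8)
The plan is to bound the $L_p$ norm in $\bz$ of the error function $\delta(\cdot,\bu)$ from (\ref{3.0}) \emph{uniformly} in $\bu\in(0,1/2]^d$ by the diaphony $\|g_{\xi,\Lambda,F_{r,\mathbf 0}}\|_p$, and then to invoke the Frolov cubature formula already used in Proposition \ref{P3.1}. Setting $H_\bu:=\tilde h^r(\cdot,\mathbf 0,\bu)$ and using $\tilde h^r(\bx,\bz,\bu)=H_\bu(\bx-\bz)$, a comparison of (\ref{3.0}) with the definition of $g_{\xi,\Lambda,F}$ shows that, as a function of $\bz$, $\delta(\bz,\bu)=g_{\xi,\Lambda,H_\bu}(-\bz)$, so that $\|\delta(\cdot,\bu)\|_p=\|g_{\xi,\Lambda,H_\bu}\|_p$ by the change of variables $\bz\mapsto-\bz$. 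Since $\hat F_{r,\mathbf 0}(\bk)=\prod_{j=1}^d(\bar k_j)^{-r}$ never vanishes, $g_{\xi,\Lambda,H_\bu}=T_\bu g_{\xi,\Lambda,F_{r,\mathbf 0}}$, where $T_\bu$ is the Fourier multiplier with symbol
$$
m(\bk,\bu):=\frac{\hat{\tilde h}^r(\bk,\mathbf 0,\bu)}{\hat F_{r,\mathbf 0}(\bk)}=\prod_{j=1}^d \hat{\tilde h}^r(k_j,0,u_j)\,(\bar k_j)^{r},
$$
and it suffices to prove $\sup_{\bu}\|T_\bu\|_{L_p\to L_p}\le C(r,d,p)$ for $1<p<\infty$.

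Because the symbol factors over coordinates, $T_\bu=\bigotimes_{j=1}^d T^{(1)}_{u_j}$ is a tensor product of one-dimensional operators, so its $L_p(\T^d)$ operator norm is at most the product of the $L_p(\T)$ operator norms of the one-dimensional operators $T^{(1)}_{u}$ with symbol $m_1(k,u):=\hat{\tilde h}^r(k,0,u)(\bar k)^{r}$. From (\ref{4.1'}) one computes $m_1(0,u)=u^r$ and, for $k\neq0$, $m_1(k,u)=\pi^{-r}\sin^r(\pi|k|u)$. The constant-term part multiplies the mean of $g_{\xi,\Lambda,F_{r,\mathbf 0}}$ by $u^r\le(1/2)^r$ and is harmless, so the heart of the matter is the uniform (in $u$) $L_p$-boundedness of the multiplier $k\mapsto\pi^{-r}\sin^r(\pi|k|u)$.

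This is where the restriction $1<p<\infty$ enters, and it is the main obstacle. Using power-reduction formulas, for \emph{even} $r$ one has $\sin^r(\pi|k|u)=c_0+\sum_{l=1}^{r/2}c_l\cos(2\pi l u k)$, and these are the Fourier coefficients in $k$ of the bounded measure $c_0\delta_0+\tfrac12\sum_l c_l(\delta_{lu}+\delta_{-lu})$, whose total variation $\le\sum_l|c_l|=:C(r)$ is independent of $u$; hence $T^{(1)}_{u}$ is convolution with a measure of uniformly bounded variation and is bounded on every $L_p$. For \emph{odd} $r$ the expansion produces sines, $\sin^r(\pi|k|u)=\sign(k)\sum_l s_l\sin((r-2l)\pi k u)$, so that $T^{(1)}_{u}$ factors as the conjugate-function operator (symbol $\sign(k)$) composed with convolution by a measure of uniformly bounded variation. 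By the M.~Riesz theorem the conjugate function is bounded on $L_p(\T)$ \emph{precisely} for $1<p<\infty$, which both yields $\sup_u\|T^{(1)}_{u}\|_{L_p\to L_p}\le C(r,p)$ and explains the hypothesis $1<p<\infty$ (for even $r$ the argument would in fact cover $p=1,\infty$ as well).

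Combining the tensor bound $\|T_\bu\|_{L_p\to L_p}\le C(r,p)^d=C(r,d,p)$ with $\|\delta(\cdot,\bu)\|_p=\|T_\bu g_{\xi,\Lambda,F_{r,\mathbf 0}}\|_p$ gives, uniformly in $\bu$,
$$
\|\delta(\cdot,\bu)\|_p\le C(r,d,p)\,\|g_{\xi,\Lambda,F_{r,\mathbf 0}}\|_p=C(r,d,p)\,\Lambda_m(\bW^r_{p',0},\xi),
$$
the last equality by (\ref{1.8}) with $p':=p/(p-1)$. It remains to take $(\xi,\La)$ to be the Frolov cubature formula of Proposition \ref{P3.1}; it is known (see \cite{Fro1}, \cite{By}, \cite{TBook}, \cite{VT89}) that this formula (independent of $r$) satisfies $\Lambda_m(\bW^r_{p',0},\xi)\le C(r,d,p)m^{-r}(\log m)^{(d-1)/2}$ for all $1<p<\infty$ (note $r\ge1>1-1/p=1/p'$). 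Taking the supremum over $\bu\in(0,1/2]^d$ yields $\tilde D^{r}_{p,\infty}(\xi,\La)\le C(r,p,d)m^{-r}(\log m)^{(d-1)/2}$, which is the required estimate.
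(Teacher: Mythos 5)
Your proof is correct and follows essentially the same route as the paper: the paper's Lemma \ref{L4.3} performs exactly your reduction $\|\delta(\cdot,\bu)\|_p\le C(p,d)\,\|g_{\xi,\Lambda,F_{r,\mathbf 0}}\|_p$ uniformly in $\bu$, using the sine multiplier $M_u$ (realized by shifts) composed with the Riesz projection $T$ in each variable --- the same shifts-plus-M.~Riesz mechanism as your power-reduction-plus-conjugate-function argument --- and then invokes the same known bound $\kappa_m(\bW^r_{q,\alpha},\infty)\le C(r,q,d)m^{-r}(\log m)^{(d-1)/2}$. The only nuance is attribution: for $p'\neq 2$ this cubature bound is due to Skriganov \cite{Sk}, not to the $L_2$ references you carry over from Proposition \ref{P3.1}.
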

\begin{proof} Proposition \ref{P4.1} follows from Lemma \ref{L4.3} below and known 
results on $\kappa_m(\bW^r_{q,\alpha},\infty)$. 
\begin{Lemma}\label{L4.3} Let $1<p<\infty$ and $r\in \N$. For any cubature formula $(\xi,\La)$ we have
$$
\tilde D^r_{p,\infty}(\xi,\La) \le C(p,d)\La_m(\bW^r_{p',\alpha},\xi).
$$
\end{Lemma} 
\begin{proof} It is known that for $1<q<\infty$ classes $\bW^r_{q,\alpha}$ are equivalent for all $\alpha$. Therefore, it is sufficient to prove Lemma \ref{L4.3} in the case $\alpha =\mathbf 0$. By (\ref{1.8}) 
\be\label{4.3}
  \La_m(\bW^r_{p',\mathbf 0},\xi) = \|g_{\Lambda,\xi,F_{r,\mathbf 0}}\|_{p} .
\ee
Consider two univariate multiplier operators
$$
T(f):= \sum_{k\ge 0} \hat f (k) e^{2\pi i kx}  .
$$
$$
M_u(f) := \hat f(0) + \sum_{k\neq 0} \sin(\pi ku)  \hat f (k) e^{2\pi i kx}.
$$
It is known that operator $T$ is bounded as an operator from $L_q$ to $L_q$ for $1<q<\infty$ (it follows from the Riesz theorem). The observation
$$
\sum_{k} e^{i\pi ku}  \hat f (k) e^{2\pi i kx} = f(x+u/2)
$$
implies that
$$
\|M_u\|_{L_q\to L_q} \le 1.
$$
Denote by $T_j$ and $M_{u,j}$ operators $T$ and $M_u$ acting on a function $f(x_1,\dots,x_d)$ as a function on variable $x_j$. The above boundedness of operators $T$ and $M_u$ implies that 
$$
\|\prod_{j=1}^d T_j M_{u_j,j}^r\|_{L_q\to L_q} \le C(q,d).
$$
 Then for
$$
\delta(\bz,\bu)=(\Lambda(\xi,\mathbf 0)-1)\hat{\tilde h}^r(\mathbf 0,\mathbf 0,\bu)+\sum_{\bk\neq\mathbf 0} \La(\xi,\bk) \hat{\tilde h}^r(\bk,\mathbf 0,\bu)e^{-2\pi i (\bk,\bz)},
$$
taking into account (\ref{4.1'}), we obtain for each $\bu\in (0,1/2]^d$ 
$$
\|\delta(\cdot,\bu)\|_p \le C(p,d)\|g_{\Lambda,\xi,F_{r,\mathbf 0}}\|_{p}.
$$
This completes the proof of Lemma \ref{L4.3}
\end{proof}
We now complete the proof of Proposition \ref{P4.1}. It is known (see \cite{Fro1} for $q=2$ and \cite{Sk} for $1<q<\infty$) that for $r\in \N$ and $1<q<\infty$
$$
\kappa_m(\bW^r_{q,\alpha},\infty) \le C(r,q,d) m^{-r}(\log   m)^{(d-1)/2}.
$$
The above bound with $q=p'$ and Lemma \ref{L4.3} imply Proposition \ref{P4.1}.

\end{proof}

\section{Numerical integration without smoothness assumptions}
\label{nos}

In the previous Sections \ref{disc} -- \ref{Lp} we discussed numerical integration for 
classes of functions under certain conditions on smoothness. Parameter $r$ controlled 
the smoothness. The above results show that the numerical integration characteristics 
decay with the rate $m^{-r}(\log m)^{c(d)}$, which substantially depends on smoothness $r$. The larger the smoothness -- the faster the error decay. In this section 
we discuss the case, when we do not impose any of the smoothness assumptions. 
Surprisingly, even in such a situation we can guarantee some rate of decay. Results discussed in this section apply in a very general setting. The following result is proved 
in \cite{VT149} (see also \cite{VT89} for previous results). Consider a dictionary
$$
\Di := \{K(\bx,\cdot), \bx\in \Omega^1\}
$$
and define a Banach space $X(K,q)$ as the $L_{q}(\Omega^2)$-closure of span of $\Di$. 

\begin{Theorem}\label{TA.1} Let { $\bW^K_p$} be a class of functions defined above in Section \ref{I}. Assume that { $K\in \mathcal K_{p'}$} satisfies the condition
$$
 { \|K(\bx,\cdot)\|_{L_{p'}(\Omega^2)} \le 1, \quad \bx\in \Omega^1,\quad |\Omega^1|=1}
$$
and { $J_K\in X(K,p')$}. Then for any { $m$} there exists (provided by an appropriate greedy algorithm)  a cubature formula { $Q_m(\cdot,\xi)$} such that
$$
 Q_m(\bW^K_p,\xi)\le C(p-1)^{-1/2} m^{-1/2}, \quad 1< p\le 2 .
$$
\end{Theorem}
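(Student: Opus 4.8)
The plan is to dualize the integration error into an approximation problem and then apply greedy approximation in a uniformly smooth Banach space. By the duality relation (\ref{1.7}), specialized to the Quasi-Monte Carlo weights $\lambda_\mu = 1/m$,
$$
Q_m(\bW^K_p,\xi) = D(\xi,Q,K,p') = \left\| J_K(\cdot) - \frac{1}{m}\sum_{\mu=1}^m K(\xi^\mu,\cdot)\right\|_{L_{p'}(\Omega^2)}.
$$
Hence it suffices to approximate $J_K$ in the norm of $X(K,p')$ by an equally weighted average of $m$ elements of the dictionary $\Di = \{K(\bx,\cdot):\bx\in\Omega^1\}$ with error $C(p-1)^{-1/2}m^{-1/2}$. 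The first key observation is that $J_K$ lies in the closed convex hull of $\Di$: since $|\Omega^1|=1$ the measure $d\bx$ is a probability measure, so $J_K(\by)=\int_{\Omega^1}K(\bx,\by)d\bx$ is the barycenter of $\Di$, while the normalization $\|K(\bx,\cdot)\|_{L_{p'}(\Omega^2)}\le 1$ bounds the dictionary. The hypothesis $J_K\in X(K,p')$ places the target in the correct closed subspace in which the greedy iterates live.

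The second ingredient is the smoothness of the ambient space. Because $1<p\le 2$ we have $p'=p/(p-1)\ge 2$ and, crucially, $(p-1)^{-1}=p'-1$. The space $L_{p'}$ with $p'\ge 2$ has modulus of smoothness of power type $2$, namely $\rho(u)\le \gamma u^2$ with $\gamma\le C(p'-1)=C(p-1)^{-1}$. I would then run the incremental greedy algorithm, which at the $m$-th step forms
$$
G_m = \left(1-\tfrac1m\right)G_{m-1}+\tfrac1m\,\varphi_m,\qquad \varphi_m\in\Di,
$$
the summand $\varphi_m=K(\xi^m,\cdot)$ being chosen so as to nearly minimize the resulting norm. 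Started from $G_1=\varphi_1$, this recursion produces exactly the equal-weight combination $G_m=\frac1m\sum_{j=1}^m K(\xi^j,\cdot)$, i.e. a Quasi-Monte Carlo formula whose knots $\xi^j$ are the greedily selected points. The general convergence theorem for this algorithm in a Banach space with $\rho(u)\le\gamma u^2$, applied to a target in the closed convex hull of a dictionary bounded by $1$, yields $\|J_K-G_m\|_{X(K,p')}\le C\gamma^{1/2}m^{-1/2}$. Inserting $\gamma\le C(p-1)^{-1}$ gives the claimed bound.

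The main obstacle is tracking the exact dependence on $p$. One must pin down the correct power of the smoothness constant in the greedy convergence estimate (it is $\gamma^{1/2}$ precisely because the modulus has power type $2$) and combine it with the sharp growth $\gamma\asymp p'-1=(p-1)^{-1}$ of the modulus of smoothness of $L_{p'}$; these two facts together produce the factor $(p-1)^{-1/2}$. A secondary point requiring care is that the greedy step is only an approximate minimization in the non-Hilbert setting, so the iteration must be set up with a controlled slack that does not degrade the $m^{-1/2}$ rate, and one must confirm that the barycenter representation indeed places $J_K$ in the closed convex hull over which the algorithm converges.
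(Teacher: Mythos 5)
Your proposal is correct and follows essentially the same route as the paper, which reduces the error via the duality relation (\ref{1.7}), places $J_K$ in $A_1(\D)$ as the barycenter of the dictionary $\D=\{K(\bx,\cdot)\}$ over the probability measure on $\Omega^1$, and applies the Incremental Algorithm IA($\ep$) (whose update $G_m=(1-1/m)G_{m-1}+\ff_m/m$ yields exactly the equal weights $1/m$ of a Quasi-Monte Carlo formula) together with Theorem \ref{TA.5} in $X(K,p')$ with $q=2$ and $\gamma\le C(p-1)^{-1}$ from (\ref{A.1}). The two delicate points you flag --- the tolerance schedule $\ep_n$ in the approximate greedy step and the verification that the barycenter lies in the closed convex hull --- are precisely what the cited construction in \cite{VT149} and Theorem \ref{TA.5} handle, so nothing is missing.
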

As a direct corollary of Theorem \ref{TA.1} and relation (\ref{1.7}) we obtain the following result about the $(K,q)-discrepancy$. 
  
\begin{Theorem}\label{TA.2}   Assume that { $K\in \mathcal K_{q}$} satisfies the condition
$$
 { \|K(\bx,\cdot)\|_{L_{q}(\Omega^2)} \le 1, \quad \bx\in \Omega^1,\quad |\Omega^1|=1}
$$
and  $J_K\in X(K,q)$. Then for any  $m$ there exists (provided by an appropriate greedy algorithm)  a cubature formula  $Q_m(\cdot,\xi)$  such that
$$
 D(\xi,Q,K,q)\le Cq^{1/2} m^{-1/2}, \quad 2\le q <\infty.
$$
\end{Theorem}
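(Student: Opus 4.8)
The plan is to derive Theorem \ref{TA.2} directly from Theorem \ref{TA.1} using the duality relation (\ref{1.7}) established in Section \ref{I}. The essential observation is that the $(K,q)$-discrepancy and the worst-case error of numerical integration over the class $\bW^K_p$ are two sides of the same coin, with $q$ and $p$ related by conjugacy. Since Theorem \ref{TA.1} is stated for the integration error $Q_m(\bW^K_p,\xi)$ in the range $1<p\le 2$, and relation (\ref{1.7}) reads $D(\xi,\Lambda,K,p')=\Lambda_m(\bW^K_p,\xi)$, specializing to the Quasi-Monte Carlo case $\Lambda_m=Q_m$ gives $D(\xi,Q,K,p')=Q_m(\bW^K_p,\xi)$. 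The whole matter is then to set $q=p'$ and translate the hypotheses and the conclusion from the $p$-language into the $q$-language.

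First I would set $q:=p'=p/(p-1)$. As $p$ ranges over $(1,2]$, the conjugate exponent $q$ ranges over $[2,\infty)$, which is exactly the stated range $2\le q<\infty$ in Theorem \ref{TA.2}. Next I would check that the hypotheses match under this substitution: the assumption $K\in\K_{p'}$ becomes $K\in\K_q$; the normalization $\|K(\bx,\cdot)\|_{L_{p'}(\Omega^2)}\le 1$ becomes $\|K(\bx,\cdot)\|_{L_q(\Omega^2)}\le 1$; the condition $|\Omega^1|=1$ is unchanged; and the requirement $J_K\in X(K,p')$ becomes $J_K\in X(K,q)$. Thus every hypothesis of Theorem \ref{TA.1} with exponent $p$ is precisely a hypothesis of Theorem \ref{TA.2} with exponent $q=p'$, and the same greedy-algorithm cubature formula $Q_m(\cdot,\xi)$ can be invoked.

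It remains to transcribe the bound. Theorem \ref{TA.1} provides $Q_m(\bW^K_p,\xi)\le C(p-1)^{-1/2}m^{-1/2}$. Applying (\ref{1.7}) in the Quasi-Monte Carlo case converts the left-hand side to $D(\xi,Q,K,p')=D(\xi,Q,K,q)$. For the right-hand side I would express the constant $(p-1)^{-1/2}$ in terms of $q$: from $q=p/(p-1)$ one gets $p-1=p/q$, so $(p-1)^{-1/2}=(q/p)^{1/2}\le q^{1/2}$ since $p\ge 1$. Hence $D(\xi,Q,K,q)\le Cq^{1/2}m^{-1/2}$, which is exactly the asserted estimate.

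Strictly speaking this is a corollary rather than a theorem requiring a genuinely new argument, so I do not anticipate a serious obstacle; the one point deserving care is the bookkeeping of the constant. I would confirm that the factor $(q/p)^{1/2}$ is genuinely bounded by $q^{1/2}$ (which uses $p\ge 1$, absorbing the harmless $p^{-1/2}\le 1$), and make sure the endpoints of the exponent ranges correspond correctly, namely that $p=2$ yields $q=2$ and $p\to 1^+$ yields $q\to\infty$, so that the open/closed structure of the intervals is respected. Apart from tracking this constant through the conjugacy relation, the proof is a direct application of (\ref{1.7}) to Theorem \ref{TA.1}.
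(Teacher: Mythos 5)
Your proposal is correct and matches the paper exactly: the paper states Theorem \ref{TA.2} as ``a direct corollary of Theorem \ref{TA.1} and relation (\ref{1.7})'' without further detail, and your substitution $q=p'$, translation of hypotheses, and constant bookkeeping $(p-1)^{-1/2}=(q/p)^{1/2}\le q^{1/2}$ are precisely the omitted verification.
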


\begin{Remark}\label{RA.1} In Theorems \ref{TA.1} and \ref{TA.2} we impose the restriction $1<p\le 2$ or the dual one $2\le q <\infty$. The proof of Theorems \ref{TA.1} and \ref{TA.2} from \cite{VT149} also works in the case $2<p<\infty$ or $1<q<2$ and gives
$$
 Q_m(\bW^K_p,\xi)\le C  m^{-1/p}, \quad 2< p<\infty ,
$$
$$
 D(\xi,Q,K,q)\le C m^{\frac{1}{q}-1}, \quad 1< q <2.
$$
\end{Remark}

Let us discuss a special case $K(\bx,\by)=F(\bx-\by)$, $\Omega^1=\Omega^2 = [0,1)^d$ and $1$-periodic in each variable functions. Then we associate with a cubature formula $(\xi,\La)$ and the function $F$ the function $g_{\xi,\La,F}(\bx)$.
The following Proposition is proved in \cite{VT89}.
\begin{Proposition}\label{PA.1} Let $1<p<\infty$ and $\|F\|_{p'} \le 1$. Then the kernel
$K(\bx,\by)=F(\bx-\by)$ satisfies the assumptions of Theorem \ref{TA.1}.
\end{Proposition}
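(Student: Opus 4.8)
The plan is to verify the three hypotheses of Theorem \ref{TA.1} for the convolution kernel $K(\bx,\by)=F(\bx-\by)$ on $\Omega^1=\Omega^2=[0,1)^d$: (i) $K\in\mathcal K_{p'}$, (ii) the normalization $\|K(\bx,\cdot)\|_{L_{p'}}\le 1$ together with $|\Omega^1|=1$, and (iii) the membership $J_K\in X(K,p')$. The first two are immediate from the translation invariance of Lebesgue measure on the torus. Indeed, for every fixed $\bx$ the function $K(\bx,\cdot)=F(\bx-\cdot)$ is a translate of $F$, so $\|K(\bx,\cdot)\|_{L_{p'}}=\|F\|_{p'}\le 1$; since $1<p<\infty$ forces $1<p'<\infty$, this also gives $K(\bx,\cdot)\in L_{p'}(\Omega^2)$. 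For fixed $\by$, $K(\cdot,\by)$ is again a translate of $F$, hence integrable over $\Omega^1$ because $L_{p'}\subset L_1$ on the probability space $[0,1)^d$. Finally $J_K(\by)=\int_{[0,1)^d}F(\bx-\by)\,d\bx=\hat F(\mathbf 0)$ is a constant, so $J_K\in L_{p'}$, and $|\Omega^1|=1$. Thus $K\in\mathcal K_{p'}$ and the normalization holds; measurability of $K$ on the product is clear.

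The substance of the proposition is condition (iii), that the constant function $J_K\equiv\hat F(\mathbf 0)$ lies in $X(K,p')$, the $L_{p'}$-closure of $\mathrm{span}\{K(\bx,\cdot):\bx\in[0,1)^d\}$. The key observation is that $J_K$ is literally the average of the dictionary elements over $\bx$, namely
$$ J_K(\by)=\int_{[0,1)^d}K(\bx,\by)\,d\bx. $$
So I would realize this integral as a norm limit of Riemann sums, each of which is a finite linear combination of dictionary elements and hence lies in $\mathrm{span}(\Di)$. Concretely, partition $[0,1)^d$ into $N^d$ congruent subcubes $Q_i$ with centers $\bx_i$ and set $S_N(\by):=N^{-d}\sum_i K(\bx_i,\by)\in\mathrm{span}(\Di)$. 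Writing $S_N-J_K=\sum_i\int_{Q_i}\bigl(K(\bx_i,\cdot)-K(\bx,\cdot)\bigr)\,d\bx$ and applying Minkowski's integral inequality gives
$$ \|S_N-J_K\|_{p'}\le\sum_i\int_{Q_i}\|F(\bx_i-\cdot)-F(\bx-\cdot)\|_{p'}\,d\bx. $$

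By translation invariance, $\|F(\bx_i-\cdot)-F(\bx-\cdot)\|_{p'}=\|F-F(\cdot-(\bx-\bx_i))\|_{p'}$ is the $L_{p'}$ modulus of continuity of $F$ at the displacement $\bx-\bx_i$, whose magnitude is at most the diameter of $Q_i$. Since $1<p'<\infty$, translation is continuous in $L_{p'}$, so this modulus tends to $0$ uniformly as the mesh $N^{-1}\to 0$; hence $\|S_N-J_K\|_{p'}\to 0$ and $J_K\in X(K,p')$. This last convergence is the only real point of the argument, and it is exactly where the restriction $p<\infty$ (equivalently $p'<\infty$) is used: for $p=1$ one would have $p'=\infty$ and translation need not be continuous in $L_\infty$. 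Equivalently, one could phrase the same fact by saying that the continuous, hence Bochner-integrable, $L_{p'}$-valued map $\bx\mapsto K(\bx,\cdot)$ has Bochner integral $J_K$ lying in the closed span of its range; the Riemann-sum version above simply makes the membership in $X(K,p')$ transparent. With all three hypotheses verified, Theorem \ref{TA.1} applies and the proposition follows.
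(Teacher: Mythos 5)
Your proof is correct. Note that the paper itself gives no argument for this proposition --- it defers entirely to the citation \cite{VT89} --- so there is no in-paper proof to compare against; your write-up supplies exactly the standard argument that the citation covers. The verification of $K\in\mathcal K_{p'}$ and of the normalization via translation (and reflection) invariance of the measure on the torus is right, as is the identification $J_K\equiv \hat F(\mathbf 0)$, and the heart of the matter is indeed condition (iii): approximating $J_K=\int_{[0,1)^d}K(\bx,\cdot)\,d\bx$ by Riemann sums $S_N=N^{-d}\sum_i K(\bx_i,\cdot)\in\mspan(\Di)$, with Minkowski's integral inequality reducing the error to the $L_{p'}$ modulus of continuity of $F$ under translation, which tends to zero precisely because $1<p$ forces $p'<\infty$. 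You correctly flag that this is where $p>1$ enters (for $p=1$, $p'=\infty$, mean-continuity of translation fails). One small notational point: for fixed $\bx$ the function $K(\bx,\cdot)=F(\bx-\cdot)$ is a translate of the reflection $F(-\cdot)$ rather than of $F$ itself, so the modulus of continuity you invoke is that of $F(-\cdot)$; since reflection preserves the measure this is harmless, but it is worth stating. Finally, your argument actually proves slightly more than the stated hypothesis: since the $S_N$ have nonnegative weights summing to one, they are convex combinations, so $J_K$ lies in $A_1(\Di)$, not merely in $X(K,p')$ --- and membership in $A_1(\Di)$ is what the greedy-algorithm machinery (Theorem \ref{TA.5}) behind Theorem \ref{TA.1} ultimately uses, so this stronger conclusion fits the application well.
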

Proposition \ref{PA.1}, Theorem \ref{TA.1}, Remark \ref{RA.1}, and relation (\ref{1.8}) imply
\begin{Theorem}\label{TA.3} Let $1<p<\infty$ and $\|F\|_{p} \le 1$. Then there exists 
a set $\xi$ of $m$ points such that
$$
\|g_{\xi,Q,F}(\bx)\|_p \le Cp^{1/2} m^{-1/2}, \quad 2\le p<\infty, 
$$
$$
\|g_{\xi,Q,F}(\bx)\|_p \le C m^{\frac{1}{p}-1}, \quad 1< p <2.
$$
\end{Theorem}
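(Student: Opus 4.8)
The plan is to reduce Theorem \ref{TA.3} to Theorem \ref{TA.1} and Remark \ref{RA.1} by applying them at the conjugate exponent. I would set $q:=p'=p/(p-1)$, so that $q'=p$, and run the whole machinery for the class $\bW^F_q$. The point of this relabeling is that the hypothesis $\|F\|_p\le 1$ of Theorem \ref{TA.3} is exactly the hypothesis $\|F\|_{q'}\le 1$ required by Proposition \ref{PA.1} for the exponent $q$. Hence Proposition \ref{PA.1} certifies that the kernel $K(\bx,\by)=F(\bx-\by)$ satisfies the assumptions of Theorem \ref{TA.1} with parameter $q$; here $\Omega^1=[0,1)^d$ has measure $1$, and periodicity gives $\|K(\bx,\cdot)\|_{q'}=\|F\|_{q'}$ uniformly in $\bx$, so both normalization conditions hold.

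First I would invoke the existence statements. By Theorem \ref{TA.1} (when $1<q\le 2$, i.e.\ $2\le p<\infty$) and by Remark \ref{RA.1} (when $2<q<\infty$, i.e.\ $1<p<2$) there is a set $\xi$ of $m$ points whose Quasi-Monte Carlo formula $Q_m(\cdot,\xi)$ satisfies
\be
Q_m(\bW^F_q,\xi)\le C(q-1)^{-1/2}m^{-1/2}\quad(1<q\le 2),\qquad Q_m(\bW^F_q,\xi)\le C m^{-1/q}\quad(2<q<\infty).
\ee
Then I would transfer these to $g_{\xi,Q,F}$ through the duality identity (\ref{1.8}) applied to $\bW^F_q$, which reads $Q_m(\bW^F_q,\xi)=\|g_{\xi,Q,F}\|_{q'}=\|g_{\xi,Q,F}\|_p$ because $q'=p$.

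It remains to simplify the right-hand sides back in terms of $p$. For $2\le p<\infty$ one has $q-1=p'-1=1/(p-1)$, so $(q-1)^{-1/2}=(p-1)^{1/2}\le p^{1/2}$, which gives $\|g_{\xi,Q,F}\|_p\le Cp^{1/2}m^{-1/2}$. For $1<p<2$ one has $1/q=1/p'=1-1/p$, so $m^{-1/q}=m^{1/p-1}$, which gives $\|g_{\xi,Q,F}\|_p\le C m^{1/p-1}$. These are precisely the two asserted estimates.

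The argument is essentially bookkeeping, so there is no genuine analytic obstacle; everything substantive is already contained in Theorem \ref{TA.1}, Remark \ref{RA.1}, and the duality relation (\ref{1.8}). The one place that demands care is the index juggling: one must pass to the conjugate exponent at the very start, so that the normalization $\|F\|_p\le 1$ aligns with the hypothesis of Proposition \ref{PA.1} and so that the output norm coming out of (\ref{1.8}) is $L_p$ rather than $L_{p'}$. After that, the identities $(q-1)^{-1/2}=(p-1)^{1/2}$ and $1/q=1-1/p$, together with the elementary bound $(p-1)^{1/2}\le p^{1/2}$, deliver the clean constants stated in the theorem.
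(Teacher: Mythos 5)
Your proposal is correct and follows exactly the paper's intended argument: the paper derives Theorem \ref{TA.3} precisely by combining Proposition \ref{PA.1}, Theorem \ref{TA.1}, Remark \ref{RA.1}, and relation (\ref{1.8}) at the conjugate exponent, which is your reduction verbatim. Your index bookkeeping is sound --- $q-1=1/(p-1)$ gives $(q-1)^{-1/2}=(p-1)^{1/2}\le p^{1/2}$, and $1/q=1-1/p$ gives $m^{-1/q}=m^{1/p-1}$ --- so nothing is missing.
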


Here is a corollary of Theorem \ref{TA.2} and Proposition \ref{PA.1}. 
Let $E\subset [0,1)^d$ be a measurable set. Consider $F(\bx):=\tilde \chi_E(\bx)$. 
\begin{Theorem}\label{TA.4} For any $p\in [2,\infty)$ there exists a set of $m$ points $\xi$ such that  
$$
Q_m(\{\chi_E(\bx-\bz),\bz\in [0,1)^d\},\xi,p) \le Cp^{1/2} m^{-1/2}.
$$
\end{Theorem}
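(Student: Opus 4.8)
The plan is to recognize Theorem \ref{TA.4} as the special case of Theorem \ref{TA.2} obtained from the convolution kernel $K(\bx,\by):=\tilde\chi_E(\bx-\by)=F(\bx-\by)$ with $F=\tilde\chi_E$, and with $q=p\in[2,\infty)$. The whole argument is then a matter of checking that this $K$ meets the hypotheses of Theorem \ref{TA.2} and of reading off its $(K,q)$-discrepancy (Definition \ref{D1.1}) as exactly the averaged Quasi-Monte Carlo error $Q_m(\{\chi_E(\cdot-\bz):\bz\in[0,1)^d\},\xi,p)$ that we wish to bound.

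First I would set up the kernel and verify the normalization. Since $E\subset[0,1)^d$, its periodization agrees with $\chi_E$ on the cube, $\tilde\chi_E(\bx)=\chi_E(\bx)$ for $\bx\in[0,1)^d$, so by translation invariance on the torus $\|K(\bx,\cdot)\|_{L_p([0,1)^d)}=\|F\|_p=|E|^{1/p}\le 1$, and $|\Omega^1|=|[0,1)^d|=1$. Thus the normalization condition of Theorem \ref{TA.2} holds automatically. The same computation gives $J_K(\by)=\int_{[0,1)^d}\tilde\chi_E(\bx-\by)\,d\bx=|E|$, a constant function.

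The one substantive hypothesis of Theorem \ref{TA.2} is the membership $J_K\in X(K,p)$, i.e. that the constant $|E|$ lies in the $L_p$-closure of the span of $\{\tilde\chi_E(\bx-\cdot):\bx\in[0,1)^d\}$. This is precisely what Proposition \ref{PA.1} supplies. Applied with its exponent equal to $p'$ (its hypothesis then reads $\|F\|_p\le 1$ and its conclusion concerns the index $p$; note $1<p'<\infty$ holds since $p\in[2,\infty)$), Proposition \ref{PA.1} certifies that $K(\bx,\by)=F(\bx-\by)$ satisfies $K\in\mathcal K_p$, $\|K(\bx,\cdot)\|_{L_p}\le 1$, and $J_K\in X(K,p)$ — exactly the assumptions of Theorem \ref{TA.2} with $q=p$. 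Hence Theorem \ref{TA.2} produces a Quasi-Monte Carlo formula $Q_m(\cdot,\xi)$ with $D(\xi,Q,K,p)\le Cp^{1/2}m^{-1/2}$.

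Finally I would match definitions to transfer the bound. By Definition \ref{D1.1}, $D(\xi,Q,K,p)=\|J_K(\by)-\frac{1}{m}\sum_{\mu}K(\xi^\mu,\by)\|_{L_p(\by)}=\big\||E|-\frac{1}{m}\sum_{\mu}\tilde\chi_E(\xi^\mu-\by)\big\|_{L_p}$, while, interpreting $\chi_E(\cdot-\bz)$ periodically, $Q_m(\{\chi_E(\cdot-\bz)\},\xi,p)=\big\||E|-\frac{1}{m}\sum_{\mu}\chi_E(\xi^\mu-\bz)\big\|_{L_p(\bz)}$ because $\int_{[0,1)^d}\chi_E(\bx-\bz)\,d\bx=|E|$. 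The two expressions coincide under $\by=\bz$, so the bound of Theorem \ref{TA.2} is the desired bound. I expect no real obstacle: the only nontrivial input is the density statement $J_K\in X(K,p)$, which Proposition \ref{PA.1} already provides, and the only delicate (but routine) points are the duality bookkeeping linking the range $2\le q<\infty$ of Theorem \ref{TA.2} to the hypothesis $1<p<\infty$ of Proposition \ref{PA.1}, together with the periodic identification $\tilde\chi_E=\chi_E$ on the cube that underlies the equality of the two error quantities.
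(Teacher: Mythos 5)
Your proposal is correct and takes essentially the same route as the paper, which states Theorem \ref{TA.4} precisely as a corollary of Theorem \ref{TA.2} and Proposition \ref{PA.1} applied to the convolution kernel with $F=\tilde\chi_E$. Your verification of the normalization $\|F\|_p=|E|^{1/p}\le 1$, the duality bookkeeping ($q=p\in[2,\infty)$ via Proposition \ref{PA.1} with exponent $p'\in(1,2]$), and the identification of $D(\xi,Q,K,p)$ with the averaged Quasi-Monte Carlo error just makes explicit the checks the paper leaves to the reader.
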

We note that there are interesting results on the behavior of $Q_m(\{\chi_E(\bx-\bz),\bz\in [0,1)^d\},\xi,\infty)$ under assumption that $E$ is a convex set (see \cite{BC}). 
Theorem \ref{TA.4} shows that for $p<\infty$ we do not need any assumptions on the geometry of $E$ in order to get the upper bound $\ll m^{-1/2}$ for the discrepancy. 

The proof of the above Theorems \ref{TA.1}--\ref{TA.4} is constructive (see \cite{VT149}), it is based on the greedy algorithms. We formulate the related result from the theory of greedy approximation.   We remind some notations from the theory of greedy approximation in Banach spaces. The reader can find a systematic presentation of this theory in \cite{Tbook}, Chapter 6. 
Let $X$ be a Banach space with norm $\|\cdot\|$. We say that a set of elements (functions) ${\mathcal D}$ from $X$ is a dictionary   if each $g\in {\mathcal D}$ has norm less than or equal to one ($\|g\|\le 1$)
 and the closure of $\sp {\mathcal D}$ coincides with $X$.  
 
For an element $f\in X$ we denote by $F_f$ a norming (peak) functional for $f$: 
$$
\|F_f\| =1,\qquad F_f(f) =\|f\|.
$$
The existence of such a functional is guaranteed by the Hahn-Banach theorem.

We proceed to  the Incremental Greedy Algorithm (see \cite{T12} and \cite{Tbook}, Chapter 6).       Let $\ep=\{\ep_n\}_{n=1}^\infty $, $\ep_n> 0$, $n=1,2,\dots$ . For a Banach space $X$ and a dictionary $\Di$ define the following algorithm IA($\ep$) $:=$ IA($\ep,X,\Di$).

 {\bf Incremental Algorithm with schedule $\ep$ (IA($\ep,X,\Di$)).} 
  Denote $f_0^{i,\ep}:= f$ and $G_0^{i,\ep} :=0$. Then, for each $m\ge 1$ we have the following inductive definition.

(1) $\ff_m^{i,\ep} \in \Di$ is any element satisfying
$$
F_{f_{m-1}^{i,\ep}}(\ff_m^{i,\ep}-f) \ge -\ep_m.
$$

(2) Define
$$
G_m^{i,\ep}:= (1-1/m)G_{m-1}^{i,\ep} +\ff_m^{i,\ep}/m.
$$

(3) Let
$$
f_m^{i,\ep} := f- G_m^{i,\ep}.
$$

We consider here approximation in uniformly smooth Banach spaces. For a Banach space $X$ we define the modulus of smoothness
$$
\rho(u) := \sup_{\|x\|=\|y\|=1}\left(\frac{1}{2}(\|x+uy\|+\|x-uy\|)-1\right).
$$
It is well known (see for instance \cite{DGDS}, Lemma B.1) that in the case $X=L_p$, 
$1\le p < \infty$ we have
\begin{equation}\label{A.1}
\rho(u) \le \begin{cases} u^p/p & \text{if}\quad 1\le p\le 2 ,\\
(p-1)u^2/2 & \text{if}\quad 2\le p<\infty. \end{cases}     
\end{equation}
 
 Denote by $A_1({\mathcal D}):=A_1(\Di,X)$ the closure in $X$ of the convex hull of ${\mathcal D}$. Proof of Theorem \ref{TA.1} and Remark \ref{RA.1} is based on the following theorem proved in \cite{T12} (see also \cite{Tbook}, Chapter 6).

\begin{Theorem}\label{TA.5} Let $X$ be a Banach space with  modulus of smoothness $\rho(u)\le \gamma u^q$, $1<q\le 2$. Set
$$
\ep_n := \bt\gamma ^{1/q}n^{-1/p},\qquad p:=\frac{q}{q-1},\quad n=1,2,\dots .
$$
Then, for every $f\in A_1({\mathcal D})$ we have
$$
\|f_m^{i,\ep}\| \le C(\bt) \gamma^{1/q}m^{-1/p},\qquad m=1,2\dots.
$$
\end{Theorem}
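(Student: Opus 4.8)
The plan is to reduce the theorem to a single scalar recursion for the error norms $a_m := \|f_m^{i,\ep}\|$ and then to solve that recursion by induction on $m$, using the two structural features of the algorithm together with the modulus of smoothness.

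First I would record the geometry of one step. From step (2), $G_m^{i,\ep} = (1-1/m)G_{m-1}^{i,\ep} + \ff_m^{i,\ep}/m$, so that
$$
f_m^{i,\ep} = f_{m-1}^{i,\ep} + \frac1m\bigl(G_{m-1}^{i,\ep} - \ff_m^{i,\ep}\bigr) =: f_{m-1}^{i,\ep} + y_m .
$$
Since each $\ff_j^{i,\ep}\in\D$ has norm at most one and $G_{m-1}^{i,\ep}$ is a convex combination of such elements, $\|y_m\|\le 2/m$. Before using step (1) I must check that an admissible $\ff_m^{i,\ep}$ exists: because $f\in A_1(\D)$, there is a convex combination $\sum_i c_i g_i$ of dictionary elements with $\|f-\sum_i c_i g_i\|\le \ep_m$, and then $\max_i F_{f_{m-1}^{i,\ep}}(g_i)\ge F_{f_{m-1}^{i,\ep}}\bigl(\sum_i c_i g_i\bigr)\ge F_{f_{m-1}^{i,\ep}}(f)-\ep_m$, so some $g_i$ meets the selection rule. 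Combining $F_{f_{m-1}^{i,\ep}}(f_{m-1}^{i,\ep})=a_{m-1}$, the identity $G_{m-1}^{i,\ep}=f-f_{m-1}^{i,\ep}$, and $F_{f_{m-1}^{i,\ep}}(\ff_m^{i,\ep}-f)\ge -\ep_m$ gives $F_{f_{m-1}^{i,\ep}}(y_m)\le (\ep_m - a_{m-1})/m$.

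Next I would invoke the modulus of smoothness through the inequality $\|x+y\|\le \|x\|+F_x(y)+2\|x\|\rho(\|y\|/\|x\|)$ (a standard consequence of the definition of $\rho$ and $\|x-y\|\ge \|x\|-F_x(y)$). Applying it with $x=f_{m-1}^{i,\ep}$, $y=y_m$, and using $\rho(u)\le\gamma u^q$, $\|y_m\|\le 2/m$, and the functional bound above, I obtain the recursion
$$
a_m \le \Bigl(1-\frac1m\Bigr)a_{m-1} + \frac{\ep_m}{m} + \frac{2^{q+1}\gamma}{m^q}\,a_{m-1}^{\,1-q}.
$$
With the schedule $\ep_m=\bt\gamma^{1/q}m^{-1/p}$ I would then prove the ansatz $a_m\le A(m):=C(\bt)\gamma^{1/q}m^{-1/p}$ by induction: the reduction $a_{m-1}/m$ produced by the first term creates a budget $A(m)-(1-1/m)A(m-1)\sim q^{-1}C(\bt)\gamma^{1/q}m^{-1/p-1}$, and one checks that both $\ep_m/m$ and the smoothness correction are of this same lower order $m^{-1/p-1}$ when $a_{m-1}$ has the expected size, the matching of the powers of $m$ and $\gamma$ (here $\gamma^{1/q}$ from $1+(1-q)/q=1/q$) being the point of the calibration.

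The main obstacle is precisely this inductive step, and within it the correction $m^{-q}a_{m-1}^{1-q}$: since $1-q<0$ this term \emph{grows} as $a_{m-1}$ shrinks, so substituting the inductive upper bound for $a_{m-1}$ does not control it. I would resolve this by a case split at a threshold $\tau_m\sim \tfrac12 A(m-1)$. When $a_{m-1}\ge \tau_m$ the correction is maximized at $\tau_m$, where it is of order $\gamma^{1/q}m^{-1/p-1}$ and hence fits the budget once $C(\bt)$ is taken large (the competing power $C^{1-q}\to 0$, so a large $C$ beats the constant $\bt$). When $a_{m-1}<\tau_m$ I instead use the crude bound $a_m\le a_{m-1}+2/m$, and since $2/m$ lies far below the target rate $m^{-1/p}\ge m^{-1/2}$, this keeps $a_m\le A(m)$ as well. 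Choosing $C(\bt)$ to dominate both cases (and to cover the finitely many small $m$) closes the induction and yields the stated bound.
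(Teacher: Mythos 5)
Your proof is correct and follows essentially the same route as the original proof in \cite{T12} (and \cite{Tbook}, Chapter 6), to which this paper defers for Theorem \ref{TA.5}: the identity $f_m^{i,\ep}=(1-1/m)f_{m-1}^{i,\ep}+\frac1m(f-\ff_m^{i,\ep})$, the smoothness inequality $\|x+y\|\le\|x\|+F_x(y)+2\|x\|\rho(\|y\|/\|x\|)$ combined with the selection rule to get the recursion $a_m\le(1-1/m)a_{m-1}+\ep_m/m+2^{q+1}\gamma m^{-q}a_{m-1}^{1-q}$, and an inductive dichotomy to tame the $a_{m-1}^{1-q}$ term are exactly the standard scheme (the source takes the threshold $\sim\ep_m$ rather than your $\sim\frac12 A(m-1)$, which is immaterial). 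One point you leave tacit: absorbing the additive $2/m$ in the small-$a_{m-1}$ case and in the base step requires $C(\beta)\gamma^{1/q}$ to be bounded below, which is legitimate because every Banach space satisfies $\rho(u)\ge(1+u^2)^{1/2}-1$, so the hypothesis $\rho(u)\le\gamma u^q$ forces $\gamma\ge\sqrt2-1$; with this one-line remark your induction closes with a constant depending only on $\beta$ (uniformly in $q\in(1,2]$, since $(C/2)^{1-q}\le 1$ once $C\ge 2$).
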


\section{Discussion}
\label{D}

The first result on the lower bound for discrepancy was the following conjecture of van der Corput \cite{Co1} and \cite{Co2} formulated in 1935. Let $\xi^j\in[0,1]$, $j=1,2,\dots$, then we have
$$
\limsup_{m\to \infty}mD_\infty(\xi^1,\dots,\xi^m)  =\infty.
$$
This conjecture was proved by van Aardenne-Ehrenfest \cite{AE} in 1945:
$$
\limsup_{m\to \infty}\frac{\log\log\log m}{\log\log m}mD_\infty(\xi^1,\dots,\xi^m) >0.
$$
Let us denote
$$
D(m,d)_q :=\inf_{\xi} D_q(\xi),\quad \xi=\{\xi_j\}_{j=1}^m,\quad 1\le q\le \infty.
$$
In 1954 K. Roth \cite{Ro} proved that
\be\label{5.1}
D(m,d)_2 \ge C(d)m^{-1}(\log m)^{(d-1)/2}. 
\ee
In 1972 W. Schmidt \cite{Sch1} proved
\be\label{5.2}
D(m,2)_\infty \ge Cm^{-1}\log m . 
\ee
In 1977 W. Schmidt \cite{Sch} proved
\be\label{5.3}
D(m,d)_q \ge C(d,q)m^{-1}(\log m)^{(d-1)/2},\qquad 1<q\le \infty.  
\ee
In 1981 G. Hal{\' a}sz \cite{Ha} proved
\be\label{5.4}
D(m,d)_1 \ge C(d)m^{-1}(\log m)^{1/2}. 
\ee
The following conjecture has been formulated in \cite{BC} as an excruciatingly difficult great open problem.
\begin{Conjecture}\label{Con5.1} We have for $d\ge 3$
$$
D(m,d)_\infty \ge C(d)m^{-1}(\log m)^{d-1}. 
$$
\end{Conjecture}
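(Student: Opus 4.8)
The statement is the celebrated \emph{star-discrepancy conjecture}, which is open in every dimension $d\ge 3$; accordingly what follows is the state-of-the-art line of attack (the \emph{orthogonal function method} of Roth, Schmidt, and Hal\'asz, in the modern form developed by Bilyk, Lacey, and Vagharshakyan) rather than a finished argument. The plan is to fix an arbitrary point set $\xi=\{\xi^\mu\}_{\mu=1}^m\subset[0,1)^d$, pass to the unnormalized discrepancy function $D_\xi(\bx):=\#\{\mu:\xi^\mu\in[\mathbf{0},\bx)\}-m\,x_1\cdots x_d$, and reduce the claim to the lower bound $\|D_\xi\|_\infty\ge C(d)(\log m)^{d-1}$. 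For this I would exploit the duality $\|D_\xi\|_\infty\ge \langle D_\xi,\Psi\rangle/\|\Psi\|_1$, valid for any test function $\Psi$, and try to engineer $\Psi$ so that the correlation $\langle D_\xi,\Psi\rangle$ is as large as the number of relevant dyadic frequencies while $\|\Psi\|_1$ stays bounded.

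First I would expand $D_\xi$ in the Haar system and build $\Psi$ as a \emph{Riesz product} of signed hyperbolic $\br$-functions over the dyadic levels $|\br|=n$, with $n\asymp\log m$. There are $\binom{n+d-1}{d-1}\asymp n^{d-1}$ frequency vectors $\br\in\Z^d_{\ge0}$ with $|\br|=n$, and the design of $\Psi$ aims to make each of them contribute an equal share to $\langle D_\xi,\Psi\rangle$. The quantitative heart of the reduction is then the \emph{small ball inequality}: a sharp lower bound for the $L_\infty$ norm of a hyperbolic Haar sum $\sum_{|\br|=n}f_\br$. Feeding its conjectured full strength into the duality bound produces exactly the target power $(\log m)^{d-1}$, whereas the crude orthogonality ($L_2$) estimate yields only the square-root-smaller exponent $(\log m)^{(d-1)/2}$, which is precisely Roth's bound (\ref{5.1}) transferred to $L_\infty$.

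The decisive step, and the place where everything stalls for $d\ge3$, is controlling the \emph{lower-order terms} that appear when the Riesz product is multiplied out: products of Haar functions whose supporting dyadic boxes coincide or overlap on several coordinates. In dimension $d=2$ each scale admits essentially one admissible splitting of $n$ between the two coordinates, these coincidence terms telescope, the full small ball inequality holds (established for $d=2$ by Talagrand and by Temlyakov), and one recovers Schmidt's sharp bound (\ref{5.2}), which already matches the conjecture for $d=2$. For $d\ge3$ the number of ways to distribute $n$ among the $d$ coordinates grows polynomially in $n$, the product expansion generates a combinatorial profusion of such collision terms, and estimating their cumulative contribution is itself an unsolved analytic problem.

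Thus the main obstacle is not the duality setup but the \emph{higher-dimensional small ball inequality with its full (linear in $n$) gain}. The strongest partial results, due to Bilyk, Lacey, and Vagharshakyan, push the exponent only to $(d-1)/2+\eta(d)$ for a small $\eta(d)>0$, leaving an enormous gap to the conjectured $d-1$. I would therefore expect the entire weight of the proof to rest on establishing this inequality; absent a genuinely new idea for taming the collision terms in three and more dimensions, the strategy above recovers Roth's exponent with a small improvement but does not reach the conjecture.
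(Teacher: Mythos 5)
You were asked to ``prove'' Conjecture \ref{Con5.1}, but the paper contains no proof of it and could not: it explicitly records that this problem is still open, describing it (following \cite{BC}) as an excruciatingly difficult great open problem, and cites only the partial improvement $D(m,d)_\infty \ge C(d)m^{-1}(\log m)^{(d-1)/2+\delta(d)}$ of Bilyk--Lacey \cite{BL} and Bilyk--Lacey--Vagharshakyan \cite{BLV}. Your proposal is therefore correct in the only sense available: you identify the statement as the open star-discrepancy conjecture, you do not claim a proof, and the program you sketch --- Haar expansion of the discrepancy function, $L_2$ duality recovering Roth's exponent $(d-1)/2$ as in (\ref{5.1}), Hal\'asz-style Riesz products yielding Schmidt's sharp bound (\ref{5.2}) for $d=2$, and the coincidence/collision terms in the expanded Riesz product as the obstruction for $d\ge 3$ --- is an accurate account of the orthogonal function method and of exactly where it stalls. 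Your count $\binom{n+d-1}{d-1}\asymp n^{d-1}$ of hyperbolic frequency vectors and your attribution of the two-dimensional small ball inequality to Talagrand and Temlyakov are also correct.

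One substantive correction: your claim that feeding the conjectured small ball inequality at ``full strength'' into the duality bound ``produces exactly the target power $(\log m)^{d-1}$'' overstates the connection. There is no formal implication from the small ball inequality to Conjecture \ref{Con5.1}: that inequality concerns hyperbolic Haar sums with arbitrary (or signed) coefficients, whereas the Haar coefficients of the discrepancy function are highly structured, and the two problems are linked only through the shared Riesz-product technology. Moreover, the exponents do not match. The conjectured sharp signed small ball bound is $n^{d/2}$, a gain of only $\sqrt{n}$ over the $L_2$ estimate $n^{(d-1)/2}$; transferred by analogy to discrepancy it suggests the exponent $(\log m)^{d/2}$, which for $d\ge 3$ falls strictly short of the $(\log m)^{d-1}$ asserted in Conjecture \ref{Con5.1} (the two agree only at $d=2$, which is why the two-dimensional picture is misleading). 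Indeed, part of the notoriety of this problem is that experts disagree on whether $d/2$ or $d-1$ is the true exponent; the paper's Theorem \ref{T5.1} and Theorem \ref{T5.2}, which obtain $(\log m)^{d-1}$ for the $r$-discrepancy under the bounded-weights condition (\ref{5.7}) with $r$ even, are the kind of evidence motivating the stronger exponent, and its Conjectures \ref{Con5.2} and \ref{Con5.3} extend the same claim to smooth discrepancy. So even a complete proof of the higher-dimensional small ball inequality would not close Conjecture \ref{Con5.1}; a Riesz-product argument would have to be run on the discrepancy function itself, with the collision terms tamed, which is precisely the missing idea.
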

This problem is still open. Recently, D. Bilyk and M. Lacey \cite{BL} and D. Bilyk, M. Lacey, and A. Vagharshakyan \cite{BLV} proved
$$
D(m,d)_\infty \ge C(d)m^{-1}(\log m)^{(d-1)/2 + \delta(d)} 
$$
with some positive $\delta(d)$. 

 We now present the results on the lower estimates for the $r$-discrepancy. We
 denote
$$
D^r_q(m,d) := \inf_{\xi}D^r_q(\xi,(1/m,\dots,1/m))
$$
where $D^r_q(\xi,\Lambda)$ is defined in (\ref{2.1}) and also denote
$$
 D^{r,o}_q(m,d) := \inf_{\xi,\Lambda}D^r_q(\xi,\Lambda).
$$
It is clear that 
$$
 D^{r,o}_q(m,d)\le D^r_q(m,d).
$$
The first result on estimating the $r$-discrepancy was obtained in 1985 by V.A. Bykovskii \cite{By}
\be\label{5.5}
D_2^{r,o}(m,d) \ge C(r,d)m^{-r}(\log m)^{(d-1)/2}. 
\ee
This result is a generalization of Roth's result (\ref{5.1}).
The generalization of Schmidt's result (\ref{5.3}) was obtained by the author in 1990 (see \cite{VT43})  
\be\label{5.6}
D^{r,o}_q(m,d) \ge C(r,d,q)m^{-r}(\log m)^{(d-1)/2}, \qquad 1<q\le \infty. 
\ee
In 1994 (see \cite{VT50}) the author proved the lower bounds in the case of weights $\Lambda$ satisfying an extra condition (\ref{1.2}).
\begin{Theorem}\label{T5.1} Let $B$ be a positive number. For any points $\xi^1,\dots,\xi^m \subset \Omega_d$ and any weights $\Lambda =(\lambda_1,\dots,\lambda_m)$ satisfying the condition
\be\label{5.7}
\sum_{\mu=1}^m|\lambda_\mu| \le B
\ee
we have for even integers $r$
$$
D^r_\infty(\xi,\Lambda) \ge C(d,B,r)m^{-r}(\log m)^{d-1}
$$
with a positive constant $C(d,B,r)$.
\end{Theorem}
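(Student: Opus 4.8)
The plan is to bound $D^r_\infty(\xi,\La)=\|\delta\|_\infty$ from below by testing $\delta$ against a Riesz-product function $\Psi$ with $\|\Psi\|_1=O(1)$, so that $D^r_\infty(\xi,\La)\ge\langle\delta,\Psi\rangle/\|\Psi\|_1$, and to engineer $\Psi$ so that $\langle\delta,\Psi\rangle\gtrsim m^{-r}(\log m)^{d-1}$. Here $\delta(\by)=\sum_\mu\lambda_\mu B_r(\xi^\mu,\by)-\prod_j(y_j^r/r!)$ as in (\ref{2.1}), and two structural facts drive the argument. First, condition (\ref{5.7}) gives the \emph{uniform} bound $|\La(\xi,\bk)|=|\sum_\mu\lambda_\mu e^{2\pi i(\bk,\xi^\mu)}|\le B$ for every $\bk$; this is exactly the control that is unavailable for equal weights, where the corresponding $L_\infty$ statement is the open Conjecture \ref{Con5.1}. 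Second, $r$ is even, so the pairing of $P:=\prod_j(y_j^r/r!)$ with the $r$-th order mean-zero building blocks below has a constant sign, letting the many small contributions accumulate rather than cancel.

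The construction of $\Psi$ follows the Roth--Schmidt anisotropic scheme. Fix $n$ with $2^n\asymp m$. For each multi-index $\bs=(s_1,\dots,s_d)$ with $s_j\ge 0$ and $s_1+\dots+s_d=n$ there are $\asymp n^{d-1}\asymp(\log m)^{d-1}$ such $\bs$; partition $[0,1)^d$ into the $2^n$ dyadic boxes $Q$ of sides $2^{-s_1},\dots,2^{-s_d}$. On each $Q$ place an $r$-smooth, mean-zero bump $\beta_Q$ (an $r$-th order B-spline difference) with $\|\beta_Q\|_\infty\le 1$, and set $\psi_\bs:=\sum_Q\varepsilon_Q\beta_Q$ with signs $\varepsilon_Q:=\sign\langle P,\beta_Q\rangle$; evenness of $r$ makes this sign independent of $Q$. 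Put
\be
\Psi:=\prod_{s_1+\dots+s_d=n}\bigl(1+\psi_\bs\bigr),
\ee
which is nonnegative because $\|\psi_\bs\|_\infty\le 1$. The decisive feature of the $r$-smooth blocks is that $\beta_Q$ annihilates every polynomial of degree $<r$, so that the spline $B_r(\xi^\mu,\cdot)$ pairs nontrivially with $\beta_Q$ only on the single box that contains $\xi^\mu$.

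Writing out $\langle\delta,\Psi\rangle$, the linear (single-block) part equals $\sum_{\bs}\langle\delta,\psi_\bs\rangle$. By the sign choice its $P$-contribution is $\sum_\bs\langle P,\psi_\bs\rangle\gtrsim n^{d-1}2^{-rn}\asymp m^{-r}(\log m)^{d-1}$, and by the annihilation property the knot-part of each $\langle\delta,\psi_\bs\rangle$ is supported on the boxes that actually contain knots, of total weight $\le B$. I would then show that the full correction --- the constant term of $\Psi$ (which, after discarding the trivial case $|\int\delta|\gtrsim m^{-r}(\log m)^{d-1}$, is negligible), the knot-part of the linear term, and all higher-order products $\psi_{\bs_1}\cdots\psi_{\bs_k}$ with $k\ge 2$ --- is bounded by a small multiple of the main term. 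This is where the two structural inputs combine: for $r\ge 2$ the smooth blocks at separated scales produce cross pairings that decay geometrically in the scale separation (a smooth analogue of the small-ball estimate), so the products are geometrically summable and also force $\|\Psi\|_1=O(1)$; meanwhile $|\La(\xi,\bk)|\le B$ bounds uniformly every cubature contribution appearing inside those products.

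I expect this last step to be the main obstacle: one must prove that the Riesz-product error terms are dominated by the linear term $n^{d-1}2^{-rn}$, with the loss absorbed into the implied constant of $2^n\asymp m$ and the normalization of the $\beta_Q$. It is precisely here that evenness (sign-definiteness of the $r$-th order kernel, $r\ge 2$) and the bounded-weight hypothesis (\ref{5.7}) are indispensable, and it is their failure for rough, equal-weight sums that leaves Conjecture \ref{Con5.1} open. Granting the estimate, $\langle\delta,\Psi\rangle\ge\tfrac12 c(d,B,r)\,n^{d-1}2^{-rn}$ and, dividing by $\|\Psi\|_1=O(1)$, the theorem follows with $C(d,B,r)>0$.
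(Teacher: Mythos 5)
Your proposal does not match how this theorem is actually proved, and the gap you yourself flag is fatal rather than technical. Note first that the paper contains no proof of Theorem \ref{T5.1}: it is quoted from \cite{VT50}. The argument there (see also \cite{VT89}) is short and takes a completely different route, with no Riesz product at all. By duality ((\ref{2.2}) and (\ref{1.8}), after a standard transfer between the B-spline class and the periodic class with kernel $F_{r,\mathbf 0}$), it suffices to bound $\|g\|_\infty$ below, where $g=g_{\xi,\La,F_{r,\mathbf 0}}$. One tests $g$ \emph{at the knots themselves}: $\sum_\mu \overline{\lambda_\mu}\, g(\xi^\mu)=\sum_{\bk\neq\mathbf 0}|\La(\xi,\bk)|^2\hat F_{r,\mathbf 0}(\bk)+\overline{\La(\xi,\mathbf 0)}\bigl(\La(\xi,\mathbf 0)-1\bigr)$. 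Evenness of $r$ enters only through the positivity $\hat F_{r,\mathbf 0}(\bk)=pr(\bar\bk)^{-r}>0$, which makes the quadratic form sign-definite; condition (\ref{5.7}) enters only through $|\sum_\mu \overline{\lambda_\mu} g(\xi^\mu)|\le B\|g\|_\infty$. Since $|\La(\xi,\mathbf 0)-1|=|\hat g(\mathbf 0)|\le\|g\|_\infty$, either this is already large or $|\La(\xi,\mathbf 0)|\ge 1/2$, and then Lemma \ref{L3.1} (which needs $r>1$, hence even $r\ge 2$ suffices) gives $(B+B)\|g\|_\infty\ge C(r,d)m^{-r}(\log m)^{d-1}$. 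The entire $(\log m)^{d-1}$ strength is thus carried by Lemma \ref{L3.1}; no multi-scale product construction is needed.

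The concrete gaps in your scheme are the following. First, the step you defer --- showing $\|\Psi\|_1=O(1)$ and that the higher-order products $\psi_{\bs_1}\cdots\psi_{\bs_k}$ are dominated by the linear term --- is precisely the small ball obstruction in $d\ge 3$: among the $\asymp n^{d-1}$ indices $\bs$ with $\|\bs\|_1=n$, distinct vectors can coincide in individual coordinates, so products of the blocks contain non-mean-zero pieces and neither the $L_1$ bound nor the domination follows; your appeal to ``geometric decay in scale separation'' is exactly the kind of estimate that is not known to beat the combinatorics of coincidences (this is why Conjectures \ref{Con5.2} and \ref{Con5.3} are open). Second, even your linear term does not close as written: the pairing of $B_r(\xi^\mu,\cdot)$ with the block $\beta_Q$ of the box containing $\xi^\mu$ is of size $\asymp 2^{-rn}$, the same order as the per-$\bs$ main term, so the knot part is bounded only by $c_2Bn^{d-1}2^{-rn}$ against a main term $c_1n^{d-1}2^{-rn}$, and nothing in your construction defeats an arbitrary $B$ (the Roth-type fix is to set $\varepsilon_Q=0$ on occupied boxes, which you never do). Third, a conceptual error: equal weights $1/m$ satisfy (\ref{5.7}) with $B=1$, so your claim that the uniform control $|\La(\xi,\bk)|\le B$ is ``unavailable for equal weights'' and that this is what leaves Conjecture \ref{Con5.1} open is wrong --- what Theorem \ref{T5.1} cannot reach is $r=1$ (and odd $r$), i.e., the lack of smoothness and sign-definiteness, not the weight condition. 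Your instinct that evenness and (\ref{5.7}) are the two essential hypotheses is correct, but the mechanism through which they act in the actual proof --- positivity of a quadratic form evaluated at the knots, combined with Lemma \ref{L3.1} --- is entirely different from your construction.
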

This result encouraged us to formulate the following generalization of the Conjecture \ref{Con5.1} (see \cite{VT89}).
\begin{Conjecture}\label{Con5.2} For all $d,r\in \N$ we have
$$
D^{r,o}_\infty(m,d) \ge C(r,d)m^{-r}(\log m)^{d-1}. 
$$
\end{Conjecture}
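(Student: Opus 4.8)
The plan is to recast the conjecture as a lower bound for numerical integration and then attack it through a Riesz-product test function. By the duality relation (\ref{2.2}) we have $D^r_\infty(\xi,\La)=\Lambda_m(\dot{\bW}^r_1,\xi)$, so proving Conjecture \ref{Con5.2} is equivalent to showing that \emph{every} cubature formula integrates the class $\dot{\bW}^r_1$ with error at least $C(r,d)m^{-r}(\log m)^{d-1}$. Since the smooth $r$-discrepancy is, up to constants, equivalent to its periodic Fourier-analytic analogue (as in the $r=1$ case discussed after (\ref{2.3})), I would pass to the periodic setting and, via (\ref{1.8}), instead bound $\|g_{\xi,\La,F_{r,\mathbf 0}}\|_\infty=\Lambda_m(\bW^r_{1,\mathbf 0},\xi)$ from below. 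The target exponent $(\log m)^{d-1}$ — rather than the $(\log m)^{(d-1)/2}$ obtained effortlessly in the $L_2$ setting of Theorem \ref{T3.1} — signals that orthogonality alone is insufficient and that a genuinely multiplicative construction is required.

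First I would fix the hyperbolic-cross structure: the Fourier coefficients of $g_{\xi,\La,F_{r,\mathbf 0}}$ carry weights $\prod_j(\bar k_j)^{-r}$, and the relevant frequencies live on roughly $\log^{d-1}m$ dyadic layers up to the level dictated by the cardinality $m$. To extract the full power $(\log m)^{d-1}$ I would build a dual test function $\psi$ with $\|\psi\|_1\le C(r,d)$ as a Riesz product
\be
\psi=\prod_{\bs}\Bigl(1+\,\text{signed rank-one block at dyadic level }\bs\Bigr),
\ee
pair it against $g_{\xi,\La,F_{r,\mathbf 0}}$, and use the multiplicativity of the product so that the contributions of the individual layers multiply rather than merely add in $L_2$. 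For even $r$ the layer blocks have cleanly signed spectral weights, exactly as the identity $\int_0^{1/2}(\sin\pi ku/\pi k)^r\,du=(\pi k)^{-r}c(r)$ was exploited in the proof of Theorem \ref{T4.2}; I expect this to permit a coherent choice of signs in $\psi$, which is why the even-$r$ hypothesis already present in Theorem \ref{T5.1} is natural here.

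The hard part is precisely this Riesz-product step, and I do not expect it to go through unconditionally: for $r=1$ the statement is Conjecture \ref{Con5.1}, open for $d\ge 3$, and the best unconditional result (\cite{BL}, \cite{BLV}) only reaches the exponent $(d-1)/2+\delta(d)$. The obstruction is the small-ball inequality — controlling the $L_\infty/L_1$ interaction of many overlapping one-dimensional bumps spread across the hyperbolic cross — which remains unresolved in three and more dimensions. The smoothness $r$ is \emph{not} the source of difficulty: since increasing $r$ only speeds the decay of the spectral weights $\prod_j(\bar k_j)^{-r}$, any proof of the small-ball inequality should transfer from $r=1$ to general even $r$ with only routine modifications. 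Accordingly, I would organize the argument so that the entire dimensional difficulty is isolated in a single small-ball estimate, from which Conjecture \ref{Con5.2} would then follow.

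A complementary, strictly weaker route is to remove the weight constraint from Theorem \ref{T5.1} directly: that theorem already yields the conjectured exponent under $\sum_\mu|\la_\mu|\le B$, so it would suffice to show that formulas with $\sum_\mu|\la_\mu|$ large cannot outperform the bounded-weight optimum. I regard this weight-reduction as the more elementary but less promising direction, since the bounded-weight proof of \cite{VT50} buys its $L_\infty$ bound precisely by trading the small-ball inequality for a cruder estimate that the constant $B$ absorbs; without that constraint the crude estimate is unavailable and one is thrown back on the small-ball problem. For these reasons I expect the small-ball inequality in dimension $d\ge 3$ to be the single decisive obstacle, exactly as in the classical case $r=1$.
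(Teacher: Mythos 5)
You were asked about Conjecture \ref{Con5.2}, and the paper contains no proof of it: it is stated as an open problem generalizing Conjecture \ref{Con5.1}, which the paper itself describes as an excruciatingly difficult open problem, so there is no internal proof to compare against. Your submission, rightly, does not claim to be a proof either; judged as a situational analysis it is largely accurate. The duality $D^r_\infty(\xi,\La)=\La_m(\dot{\bW}^r_1,\xi)$ from (\ref{2.2}) is correct, the reformulation via (\ref{1.8}) as a lower bound for $\|g_{\xi,\La,F_{r,\mathbf 0}}\|_\infty$ is the standard one, the identification of the Riesz-product/small-ball obstruction for $d\ge 3$ matches the paper's discussion of \cite{BL} and \cite{BLV}, and your reading of Theorem \ref{T5.1} --- that the bounded-weight hypothesis $\sum_\mu|\la_\mu|\le B$ substitutes for the missing small-ball input --- is consistent with \cite{VT50}.

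Two concrete gaps in the plan itself deserve flagging. First, your passage to the periodic setting is unjustified for $r\ge 2$: the paper asserts the equivalence of $D_\infty$, $D^1_\infty$, and $\tilde D^1_\infty$ only in the case $r=1$ (after (\ref{2.3})), and for smooth discrepancy it deliberately keeps the anchored $B_r$-version (Conjecture \ref{Con5.2}) and the periodic version (Conjecture \ref{Con5.3}) as \emph{separate} conjectures; moreover the example closing Section \ref{L2} shows the anchored and shifted settings can genuinely differ, so even a full small-ball-type estimate in the periodic setting would leave you owing a transference argument back to $D^{r,o}_\infty(m,d)$. Second, your assertion that the smoothness $r$ is not the source of difficulty, so that a small-ball proof would transfer from $r=1$ to even $r$ routinely, is miscalibrated in both directions: for even $r$ under bounded weights, Theorem \ref{T5.1} \emph{already} yields the full exponent $(\log m)^{d-1}$, whereas for $r=1$ even the bounded-weight case is open (it contains Conjecture \ref{Con5.1}, with equal weights $1/m$, as a special case) --- so parity of $r$ changes what is known, not merely the decay of the weights $\prod_j(\bar k_j)^{-r}$. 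Consequently, for even $r$ the obstacle separating Theorem \ref{T5.1} from the conjecture is the removal of the weight bound, a different difficulty from the small-ball inequality; and since the conjecture is stated for all $r\in\N$, your sign-coherent Riesz-product step, which you motivate by the identity $\int_0^{1/2}(\sin(\pi ku)/(\pi k))^r\,du=(\pi k)^{-r}c(r)$ valid for even $r$, leaves odd $r\ge 3$ facing the same sign obstruction as $r=1$. None of this makes your approach wrong --- it is the approach the literature suggests --- but the statement remains open, and your write-up should present itself as a reduction of the conjecture to the small-ball and weight-removal problems rather than as a proof.
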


We now proceed to the $r$-smooth $L_\bp$-discrepancy. The first lower bound for such discrepancy was obtained in the case $\bp=\infty$ under an extra condition (\ref{5.7}) on the weights (see \cite{VT165}). Here is the corresponding result from \cite{VT165}.
\begin{Theorem}\label{T5.2}   For any points $\xi^1,\dots,\xi^m \subset \Omega_d$ and  weights $\Lambda =(\lambda_1,\dots,\lambda_m)$ satisfying condition (\ref{5.7})
we have for even integers $r$
$$
\tilde D^r_\infty(\xi,\Lambda) \ge C(d,B,r)m^{-r}(\log m)^{d-1}
$$
with a positive constant $C(d,B,r)$.
\end{Theorem}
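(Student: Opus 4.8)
The plan is to reduce the $L_\infty$ estimate for the hat--function discrepancy to the already available $L_\infty$ lower bound for the $r$-discrepancy (Theorem \ref{T5.1}), exploiting that for even $r$ the average over the shape parameter $\bu$ of the error function $\delta(\bz,\bu)$ equals, up to a positive constant, the periodic $r$-discrepancy function. Concretely, I would start from $\delta(\bz,\bu)$ as written in (\ref{3.0}) and pass to its $\bu$-average,
$$
f(\bz) := \int_{(0,1/2]^d}\delta(\bz,\bu)\,d\bu .
$$
For every $\bz$ one has $|f(\bz)| \le \int_{(0,1/2]^d}|\delta(\bz,\bu)|\,d\bu \le 2^{-d}\sup_{\bu}|\delta(\bz,\bu)|$, and hence
$$
\tilde D^{r}_\infty(\xi,\La) = \sup_{\bz,\bu}|\delta(\bz,\bu)| \ge 2^d\|f\|_\infty .
$$
Thus it suffices to bound $\|f\|_\infty$ from below. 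The averaging in $\bu$ is precisely the device that keeps the $L_\infty$ norm in $\bz$ alive (rather than an $L_2$ norm, as in Theorem \ref{T3.1}), so that no square root is lost and the full power $(\log m)^{d-1}$ becomes reachable.

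Second, I would evaluate $f$ in Fourier terms using (\ref{4.1'}), exactly as in the proof of Theorem \ref{T4.2}. For even $r$ one has $\int_0^{1/2}\hat{\tilde h}^r(k,\mathbf 0,u)\,du = (\pi k)^{-r}c(r)$ with a single positive constant $c(r)$ for all $k\neq 0$; this sign--coherence is exactly what fails for odd $r$ and is the reason the hypothesis on $r$ enters. Consequently $f(\bz) = (\pi^{-r}c(r))^d\, g_{\xi,\La,r}^c(-\bz)$ for a suitable $c=c'(r)$, where $g_{\xi,\La,r}^c$ is the function of Lemma \ref{L4.2}. Up to constant rescalings on the lower--dimensional frequency blocks (those $\bk$ with some $k_j=0$), this is the periodic $r$-discrepancy function whose $L_\infty$ norm equals $\La_m(\bW^r_{1,\mathbf 0},\xi)=\|g_{\xi,\La,F_{r,\mathbf 0}}\|_\infty$ by (\ref{1.8}); the restriction $\sum_\mu|\la_\mu|\le B$ makes the mismatch between $g_{\xi,\La,r}^c$ and $g_{\xi,\La,F_{r,\mathbf 0}}$ harmless, since it only alters the contribution of lower--dimensional blocks by factors depending on $c$ and $d$.

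Finally I would invoke Theorem \ref{T5.1}: its hypotheses -- even $r$ together with $\sum_\mu|\la_\mu|\le B$ -- are exactly those assumed here, and its conclusion is the bound $m^{-r}(\log m)^{d-1}$ for the $r$-discrepancy function, i.e. for $\|f\|_\infty$ after the identification above; combined with $\tilde D^{r}_\infty(\xi,\La)\ge 2^d\|f\|_\infty$ this yields the claim with $C(d,B,r)>0$. The main obstacle is not these two reductions, which are routine, but the lower bound packaged in Theorem \ref{T5.1}: attaining the full exponent $(\log m)^{d-1}$ in $L_\infty$ (rather than the $L_2$--type $(\log m)^{(d-1)/2}$ of Lemma \ref{L4.2}) requires, for even $r$, a hyperbolic--cross test function assembled from $\sim(\log m)^{d-1}$ dyadic product blocks, together with the bounded--weight condition to control the off--diagonal pairings that would otherwise force Cauchy--Schwarz and cost a square root. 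One must also check that, for even $r$, the periodic kernel $F_{r,\mathbf 0}$ and the truncated--power ($B_r$) kernel underlying Theorem \ref{T5.1} give equivalent discrepancy functions, so that the constant finally produced is genuinely positive.
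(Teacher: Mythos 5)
You have correctly reconstructed the averaging mechanism: the paper itself gives no proof of Theorem \ref{T5.2} (it is quoted from \cite{VT165}), and your first two steps are exactly the proof of Theorem \ref{T4.2} transplanted to the sup norm --- the bound $\tilde D^r_\infty(\xi,\La)\ge 2^d\|f\|_\infty$ for $f(\bz):=\int_{(0,1/2]^d}\delta(\bz,\bu)\,d\bu$, and the identity $f(\bz)=(\pi^{-r}c(r))^d g^c_{\xi,\La,r}(-\bz)$, with evenness of $r$ entering precisely through the sign-coherence of $\int_0^{1/2}\hat{\tilde h}^r(k,\mathbf 0,u)\,du=(\pi k)^{-r}c(r)$, are sound (and condition (\ref{5.7}) gives $|\La(\xi,\bk)|\le B$, which justifies interchanging the $\bu$-integral with the Fourier series). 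The genuine gap is your final black box. Theorem \ref{T5.1} bounds the \emph{anchored} discrepancy $D^r_\infty(\xi,\La)$, which by (\ref{2.2}) is the integration error over the \emph{nonperiodic} class $\dot\bW^r_1$ built from the truncated-power kernel $B_r$; what you need after averaging is a lower bound for $\|g^c_{\xi,\La,r}\|_\infty$, which by (\ref{1.8}) (with $p=1$, $p'=\infty$) is the integration error over a \emph{periodic} convolution class. Your ``identification'' requires the inequality (periodic error) $\gtrsim$ (anchored discrepancy), and this direction is neither routine nor available: periodic test functions $F_{r,\mathbf 0}\ast\varphi$ are insensitive to the anchor and boundary structure that $D^r_\infty$ measures, and the paper itself warns (Section \ref{disc}, citing \cite{Lev2}) that anchored and periodized (Weyl-type) discrepancies can behave differently already for $r=1$, $p=2$. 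So Theorem \ref{T5.1} cannot be invoked for $\|g^c_{\xi,\La,r}\|_\infty$; the two results are siblings derived from a common source, not consequences of one another.

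The repair is to replace Theorem \ref{T5.1} by that common source: the bounded-weights \emph{periodic-class} lower bound of \cite{VT50} (see also \cite{VT89}), namely $\La_m(\bW^r_{1,\alpha},\xi)\ge C(B,r,d)\,m^{-r}(\log m)^{d-1}$ for all $(\xi,\La)\in Q(B,m)$ --- in other words, the $L_\infty$-analogue of Lemma \ref{L4.2} under condition (\ref{5.7}), which is exactly the ingredient your hyperbolic-cross remarks describe. Two smaller corrections: (i) passing between $g_{\xi,\La,F_{r,\mathbf 0}}$ and $g^c_{\xi,\La,r}$ in the $L_\infty$ norm is legitimate, but not for the reason you give --- the discrepancy between the two multipliers lives on the blocks $\{\bk:\ k_j=0 \text{ for } j\in e\}$, and the associated projections are the coordinate integrations $f\mapsto\int f\,dx_j$, which have norm $1$ on $L_\infty$; hence $\|g^c_{\xi,\La,r}\|_\infty$ and $\|g_{\xi,\La,F_{r,\mathbf 0}}\|_\infty$ are comparable with constants depending only on $c$, $r$, $d$, irrespective of the weights; (ii) the condition $\sum_\mu|\la_\mu|\le B$ is therefore not needed to tame those blocks --- it is consumed entirely by the core lower bound, and is genuinely necessary there, since without it only the exponent $(d-1)/2$ of Theorem \ref{T4.1} (via Lemma \ref{L4.2}) is available. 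With these substitutions your argument closes correctly.
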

Denote as above
$$
 \tilde D^{r,o}_\bp(m,d) := \inf_{\xi,\Lambda}\tilde D^r_\bp(\xi,\Lambda).
$$
Theorem \ref{T5.2} supports the following conjecture.
\begin{Conjecture}\label{Con5.3} For all $d,r\in \N$ we have
$$
\tilde D^{r,o}_\infty(m,d) \ge C(r,d)m^{-r}(\log m)^{d-1}. 
$$
\end{Conjecture}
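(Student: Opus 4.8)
The plan is to attack the conjecture by the dual, test-function method, exploiting the Fourier representation (\ref{3.0}) of the error function $\delta(\bz,\bu)$. Since the $L_\infty$-norm is dual to the $L_1$-norm, and since the supremum over $\bu\in(0,1/2]^d$ only helps, it suffices to fix $\bu$ in a convenient dyadic range and to construct a test function $\psi(\bz)$ with $\|\psi\|_1\le 1$ for which the pairing $\langle\delta(\cdot,\bu),\psi\rangle$ is bounded below by $C(r,d)m^{-r}(\log m)^{d-1}$. By Parseval this pairing is a weighted sum of the coefficients $\La(\xi,\bk)\hat{\tilde h}^r(\bk,\mathbf 0,\bu)$ against the Fourier coefficients of $\psi$, so the problem reduces to choosing a $\psi$ whose spectrum is concentrated on the hyperbolic frequencies that carry the discrepancy.

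The concrete construction I would attempt is a Hal\'asz-type Riesz product, adapting the proof of Theorem \ref{T5.2} but without invoking the weight bound (\ref{5.7}). The even-$r$ hypothesis is what makes this feasible: by (\ref{3.1'}) the coefficients $\hat{\tilde h}^r(k,\mathbf 0,u)=\bigl(\sin(\pi ku)/(\pi k)\bigr)^r$ are nonnegative when $r$ is even, so a real-valued product $\Psi=\prod_{\bs}\bigl(1+\gamma R_{\bs}\bigr)$, built from trigonometric polynomials $R_{\bs}$ supported on dyadic hyperbolic blocks indexed by $\bs$, can serve as $\psi$. The product is arranged so that the linear part of $\langle\delta,\Psi\rangle$ collects one positive contribution from each of the $\asymp(\log m)^{d-1}$ hyperbolic scales, producing the conjectured main term, while the higher-order (off-diagonal) products and the zeroth term $\La(\xi,\mathbf 0)$ must be shown not to cancel this gain, and $\|\Psi\|_1$ must be kept bounded. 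The genuine content is not the sign of $\hat{\tilde h}^r$ but the combinatorial assertion, encoded by the Riesz product, that no cubature formula can make the error small simultaneously at all $(\log m)^{d-1}$ hyperbolic scales.

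The hard part is that this is exactly the $r$-smooth analog of the great open problem, Conjecture \ref{Con5.1}. In dimension $d=2$ the dyadic scales are lacunary, the off-diagonal products in $\Psi$ are controllable, and the scheme closes to give the sharp bound $(\log m)^{d-1}=\log m$, the smooth counterpart of Schmidt's estimate (\ref{5.2}). For $d\ge 3$ the product structure of the hyperbolic frequencies destroys lacunarity: the $L_1$-norm of the naive Riesz product is no longer $O(1)$, the number of off-diagonal cross terms explodes, and controlling them is precisely the obstruction that keeps Conjecture \ref{Con5.1} open, where the best unconditional bounds (Bilyk, Lacey, and Vagharshakyan \cite{BLV}) reach only $(\log m)^{(d-1)/2+\delta(d)}$. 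Dropping (\ref{5.7}) compounds the difficulty, since the bound $\sum_\mu|\lambda_\mu|\le B$ supplies the uniform control $|\La(\xi,\bk)|\le B$ on all Fourier coefficients of the cubature functional that makes the test function in Theorem \ref{T5.2} effective, and this control is lost for unbounded weights. Thus a full proof appears out of reach without a genuinely new idea; a realistic intermediate target is to remove the weight restriction (\ref{5.7}) from Theorem \ref{T5.2} in the lacunary case $d=2$, or to develop a transfer principle carrying any future progress on Conjecture \ref{Con5.1} into the $r$-smooth setting via the duality (\ref{1.8}).
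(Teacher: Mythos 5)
You were asked to prove Conjecture \ref{Con5.3}, and the first thing to say is that the paper itself contains no proof of it: it is stated as an open problem, supported only by Theorem \ref{T5.2} (from \cite{VT165}), which establishes the conjectured bound under the additional weight restriction (\ref{5.7}) and only for even integers $r$. Your submission is accordingly not a proof, and you say so explicitly; judged as a map of the problem, it is essentially accurate. The dual reduction you set up (fix $\bu$, pair the error function $\delta(\cdot,\bu)$ of (\ref{3.0}) against an $L_1$-normalized test function in $\bz$, choose a Hal\'asz-type Riesz product over the $\asymp(\log m)^{d-1}$ hyperbolic dyadic blocks) is the standard and correct framework, and your diagnosis of the two obstructions matches the paper's own discussion in Section \ref{D}: for $d\ge 3$ the failure of lacunarity makes the Riesz product's $L_1$-norm and cross terms uncontrollable, which is exactly the small ball obstruction behind Conjecture \ref{Con5.1}, where the best known unconditional gain is the $(\log m)^{(d-1)/2+\delta(d)}$ of \cite{BL}, \cite{BLV}; and dropping (\ref{5.7}) removes the uniform bound $|\La(\xi,\bk)|\le B$ on the exponential sums that makes the test function in Theorem \ref{T5.2} effective.

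Two caveats on the details of your sketch. First, the conjecture is stated for all $r\in\N$, but your use of the sign positivity of $\hat{\tilde h}^r(k,\mathbf 0,u)=\bigl(\sin(\pi ku)/(\pi k)\bigr)^r$ from (\ref{3.1'}) requires $r$ even; for odd $r$ the coefficients change sign and your Riesz-product pairing loses its one-signed linear part, so your scheme at best addresses the even case (as does Theorem \ref{T5.2}), leaving part of the conjecture untouched even in principle. Second, your claim that the scheme \emph{closes} in $d=2$ is overstated as written: the lacunary Riesz-product argument of Schmidt and Hal\'asz closes in the classical equal-weight setting, and Theorem \ref{T5.2} handles general $d$ only under (\ref{5.7}); for unrestricted weights the $d=2$ case of the conjecture is not established anywhere in the paper, which is consistent with your own later, more careful framing of removing (\ref{5.7}) when $d=2$ as an intermediate target. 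With those corrections, your assessment that a full proof requires a genuinely new idea beyond the duality (\ref{1.8}) is the correct conclusion, and it coincides with the paper's stance.
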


Theorem \ref{T3.1} gives the following lower bound for $r\in\N$ and $\bp\ge \mathbf 2$
\be\label{5.8}
 \tilde D^{r,o}_\bp(m,d) \ge C(r,d)m^{-r}(\log m)^{(d-1)/2}. 
\ee
The lower bound (\ref{5.8}) is different from the lower bound from Theorem \ref{T5.2}. 
However, Proposition \ref{P3.1} shows that this bound is sharp in case $\bp=\mathbf 2$.
 
Under stronger assumption on $r$, namely, assuming that $r$ is an even number, we obtain a stronger than (\ref{5.8}) lower bound. Theorem \ref{T4.2} gives that 
then for any   $1<p<\infty$
$$
    \tilde D^{r,o}_{p,1}(m,d) \geq   C(r,d,p)   m^{-r}(\log   m)^{(d-1)/2},   \qquad
C(r,d,p)>0.
$$

Proposition \ref{P4.1} shows that the above lower bound is sharp. Moreover, it shows that for $r$ even we have for all $1<p_1<\infty$ and $1\le p_2 \le \infty$
$$
\tilde D^{r,o}_{p_1,p_2}(m,d) \asymp       m^{-r}(\log   m)^{(d-1)/2} .
$$

\end{document}